\newcommand{\C}{\mathbb{C}}
\newcommand{\R}{\mathbb{R}}
\newcommand{\N}{\mathbb{N}}
\newcommand{\G}{\Gamma}
\newcommand{\e}{\varepsilon}
\newcommand{\la}{\langle}
\newcommand{\ra}{\rangle}
\newcommand{\cleq}{\preccurlyeq}
\newcommand{\ct}{\mathbb{1}}
\newtheorem{thm}{Theorem}[section]
\newtheorem{lem}[thm]{Lemma}
\theoremstyle{definition}
\newtheorem{defn}[thm]{Definition}
\newtheorem{rem}[thm]{Remark}
\newtheorem*{ack}{Acknowledgments}
\begin{document}
\title[Norms of spherical averages]{Norms of spherical averaging operators\\ for some geometric group actions} 
\author{Bogdan Nica}
\date{\today}
\subjclass{Primary: 43A15, 20F67. Secondary: 22D15, 22D20, 20F65, 20F69.}
\address{\newline Department of Mathematical Sciences\newline Indiana University Indianapolis}
\email{bnica@iu.edu}
\begin{abstract} We obtain asymptotic estimates for the $\ell^p$-operator norm of spherical averaging operators associated to certain geometric group actions. The motivating example is the case of Gromov hyperbolic groups, for which we obtain asymptotically sharp estimates. We deduce asymptotic lower bounds for the combinatorial expansion of spheres. 
\end{abstract}
\maketitle

\section{Averaging operators}
Let $\G$ be a countable discrete group, and let $S\subseteq \G$ be a non-empty finite subset. The \emph{averaging operator} $\lambda_S$ is given by
\begin{align*}
\lambda_S=\frac{1}{|S|}\sum_{g\in S} \lambda(g)
\end{align*}
where $\lambda$ is the left regular representation of $\G$. More explicitly, $\lambda_S$ acts on complex-valued functions defined on $\G$, sending a function $\phi: \G\to \C$ to the function $\lambda_S(\phi): \G\to \C$ given by 
\[\lambda_S(\phi)(h)=\frac{1}{|S|}\sum_{g\in S} \phi(g^{-1}h).\]

The averaging operator $\lambda_S$ is a natural and fundamental operator, which appears under several guises. In the terminology of random walks, it is the transition operator for the simple random walk defined by $S$. In graph theory, $\lambda_S$ is closely related to the discrete Laplacian. Averaging operators also appear in harmonic analysis, and in ergodic theory.

Let $p\in [1,\infty)$. The averaging operator $\lambda_S$ is a bounded operator on $\ell^p\G$, and the broad problem we are interested in is that of \emph{computing or estimating the $p$-operator norm $\|\lambda_S\|_{p\to p}$}.

There are four general facts that frame this broad problem. 

Firstly,
\begin{align}\label{eq:triv-upper}
\|\lambda_S\|_{p\to p}\leq 1,
\end{align}
with equality for $p=1$. We will therefore restrict to $p\in (1,\infty)$ in what follows. More interestingly, equality in \eqref{eq:triv-upper} also holds when $\G$ is an amenable group. Kesten \cite{Kes2} originally proved this fact in  the case $p=2$; the same argument, a nice application of F\o lner sets, works in fact for each $p\in (1,\infty)$. The upshot is that our problem is interesting only when $\G$ is non-amenable.

Secondly, we have the trivial lower bound 
\begin{align}\label{eq:triv-lower}
\|\lambda_S\|_{p\to p} \geq |S|^{-1/p'}
\end{align}
where $p'$ is the conjugate exponent of $p\in (1,\infty)$, given by $1/p+1/p'=1$. Indeed, $\lambda_S$ maps $\ct_e$, the characteristic function of the identity of $\G$, to $|S|^{-1}\cdot \ct _S$, the normalized characteristic function of the subset $S$. It follows that $\|\lambda_S\|_{p\to p} \geq \||S|^{-1}\cdot \ct _S\|_p=|S|^{-1+1/p}=|S|^{-1/p'}$, as claimed.

Thirdly, we have the duality formula
\begin{align}\label{eq:dual}
\|\lambda_S\|_{p\to p}=\|\lambda_{S^{-1}}\|_{p'\to p'}.
\end{align}
This can be deduced from the identity $\la \lambda_S (\phi), \psi\ra=\la \phi, \lambda_{S^{-1}}(\psi)\ra$, for $\phi\in \ell^p\G$ and $\psi\in\ell^{p'}\G$, where $\la \cdot, \cdot \ra: \ell^p\G\times \ell^{p'}\G\to \C$ is the duality pairing given by $\la \phi, \psi\ra=\sum_{h\in \G} \phi(h)\:\overline{\psi(h)}$. The duality formula \eqref{eq:dual} allows a back-and-forth between the two ranges of exponents, $p\in (1,2]$ and $p\in [2,\infty)$. From now on, we focus on the range $p\in (1,2]$. Our preference for the lower range stems from a circle of ideas, results and open questions addressing the Banach algebra completions of the group algebra $\C\G$ under the left regular representation $\lambda$, for varying exponents $p$. The classical completions occur at $p=1$, yielding the Banach algebra $\ell^1\G$, respectively at $p=2$, yielding the reduced $C^*$-algebra $C^*_r\G$. The group Banach algebras indexed by $p\in (1,2)$ interpolate between the `easy' $\ell^1\G$ and the `hard' $C^*_r\G$. First introduced by Carl Herz in the early 1970's, this interpolating family is currently enjoying some renewed interest, see Liao--Yu \cite{LY}, Phillips \cite{Phi}, Samei--Wiersma \cite{SW, SW2}, Gardella--Thiel \cite{GT}. 

Fourthly, for $p\in (1,2)$ we have the interpolation bound
\begin{align}\label{eq: interpol}
\|\lambda_S\|_{p\to p}\leq \|\lambda_S\|^{2/p'}_{2\to 2}.
\end{align}
Indeed, Riesz-Thorin interpolation yields $\|\lambda_S\|_{p\to p}\leq \|\lambda_S\|^{1-\theta}_{1\to 1}\: \|\lambda_S\|^{\theta}_{2\to 2}$, where $\theta$ is given by $1/p=(1-\theta)+\theta/2$. Thus $\theta=2/p'$, and we recall that $\|\lambda_S\|_{1\to 1}=1$. The interpolation bound \eqref{eq: interpol} is promising--it implies that an upper bound for the $2$-operator norm $\|\lambda_S\|_{2\to 2}$ leads to upper bounds for $p$-operator norm $\|\lambda_S\|_{p\to p}$ in the range $p\in (1,2)$. It turns out, however, that the interpolation approach may not provide optimal upper bounds for $p$-operator norms; this is one takeaway from the results described in the next section.

We close this introductory section with the remark that, in certain geometric situations, it may be more natural to consider the right averaging operator $\rho_S$; this acts by sending a function $\phi: \G\to \C$ to the function $\rho_S(\phi): \G\to \C$ given by 
\[\rho_S(\phi)(h)=\frac{1}{|S|}\sum_{g\in S} \phi(hg).\]
The left averaging operator $\lambda_S$ and the right averaging operator $\rho_S$ are conjugate on $\ell^p\G$ via the isometric isomorphism $J: \ell^p\G\to \ell^p\G$, $J\phi(h)=\phi(h^{-1})$. In particular, $\lambda_S$ and $\rho_S$ have the same $p$-operator norm.

%%%%%%%%%%%%%%%%%%%
\section{Spherical averaging operators on hyperbolic groups} 
Nearly all results on $p$-operator norms of averaging operators that can be found in the literature address the case when $p=2$. The $2$-operator norm of $\lambda_S$, $\|\lambda_S\|_{2\to 2}$, is often referred to as a spectral radius for $S$. A classical direction is that of taking $S$ to be a symmetric generating set of $\G$. The seminal result of Kesten \cite{Kes} settles the case when $\G$ is a free group and $S$ is the standard symmetric generating set. Kesten-type formulas or estimates have been pursued for a number of other finitely generated groups, and natural symmetric generating sets. A very interesting study case is that of surface groups, see \cite{BC, Nag, Z, B, G}.

It is natural to consider the averaging operator $\lambda_S$ over `geometric' subsets $S$. Our main focus is on the case when $S$ is a `sphere' in $\G$. Let $l:\G\to [0,\infty)$ be a proper length function; for instance, $l$ could be the word-length defined by a finite symmetric generating subset of $\G$. The \emph{sphere} of radius $n$ is the (symmetric) subset of $\G$ given by
\begin{align*}
S(n)=\{g\in \G: l(g)=n\}. 
\end{align*}
Similarly, the \emph{ball} of radius $n$ is the (symmetric) subset of $\G$ given by $B(n)=\{g\in \G: l(g)\leq n\}$. Averaging operators over balls are also considered in this paper, though they only play a supporting role.

In the case when $\G$ is a free group endowed with the standard word-length, the $p$-operator norm of $\lambda_{S(n)}$ has been explicitly computed by Cohen \cite{Coh} for $p=2$, respectively by Pytlik \cite{Pyt81, Pyt84} for arbitrary $p\in (1,\infty)$. 

\begin{thm}[Cohen--Pytlik]
Let $\G$ be a free group on $k\geq 2$ generators, endowed with the standard word-length. Then
\begin{align}\label{eq: coh}
\big\|\lambda_{S(n)}\big\|_{2\to 2}= \big((1-1/k)n+1\big) \cdot (2k-1)^{-n/2},
\end{align}
and, for $p\in (1,2)$,
\begin{align}\label{eq: pyt}
\big\|\lambda_{S(n)}\big\|_{p\to p}=C(1/p)\cdot (2k-1)^{-n/p'}+C(1/p')\cdot (2k-1)^{-n/p}
\end{align}
where $C(t)$ is an explicit rational expression in terms of $k$ and $t$.
\end{thm}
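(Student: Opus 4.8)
The plan is to pass to the commutative algebra of radial (sphere) operators and reduce everything to a scalar computation. Write $A_n = |S(n)|\,\lambda_{S(n)} = \sum_{g\in S(n)}\lambda(g)$, so each $A_n$ is self-adjoint on $\ell^2\G$ (because $S(n)$ is symmetric) and the family commutes. Counting how a one-letter prefix cancels against a reduced word yields $A_1^2 = A_2 + 2k\,I$ and $A_1 A_n = A_{n+1} + q\,A_{n-1}$ for $n\ge 2$, where $q=2k-1$. Hence $A_n = P_n(A_1)$ for the polynomials $P_0=1$, $P_1=x$, $P_2 = x^2-2k$, $P_{n+1}=xP_n-qP_{n-1}$; equivalently, summing the recursion gives the closed form
\[ (1 - zA_1 + q z^2)\sum_{n\ge 0} A_n z^n = (1-z^2)\,I. \]
Thus $\lambda_{S(n)} = P_n(A_1)/|S(n)|$ is entirely governed by the single operator $A_1$ and by the point at which we evaluate $P_n$.

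For $p=2$, $A_1$ is self-adjoint with spectrum $[-2\sqrt q,2\sqrt q]$ by Kesten's computation for the free group, so functional calculus gives $\|A_n\|_{2\to2}=\max_{|x|\le 2\sqrt q}|P_n(x)|$. Substituting $x=2\sqrt q\cos\theta$ and collapsing the trigonometric sums produces
\[ P_n\big(2\sqrt q\cos\theta\big) = q^{(n-2)/2}\Big[(q-1)\cos\theta\,U_{n-1}(\cos\theta) + (q+1)\,T_n(\cos\theta)\Big], \]
where $T_n,U_n$ are the Chebyshev polynomials. Since $|U_{n-1}|\le n$ on $[-1,1]$ with equality only at $\pm1$, and $|T_n|\le 1$, each summand is dominated by its value at $\theta=0$; hence the maximum is attained at the spectral endpoint, with $P_n(2\sqrt q)=2q^{(n-2)/2}\big((k-1)n+k\big)$. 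Dividing by $|S(n)|=2k\,q^{n-1}$ yields \eqref{eq: coh}.

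For $p\in(1,2)$ the operator is no longer self-adjoint, and the heart of the matter is to pin down where to evaluate $P_n$. Restricting $A_1$ to radial functions $\phi(g)=f(l(g))$ gives the weighted shift $(\mathcal A_1 f)(m)=q\,f(m+1)+f(m-1)$ on sequences, carrying the $\ell^p$-weight $|S(m)|\asymp q^m$; the borderline-summable exponential $f(m)=q^{-m/p}$ is a bulk eigenvector with eigenvalue $\mu=q^{1/p'}+q^{1/p}$. The characteristic roots of the recursion at this $\mu$ are precisely $q^{1/p'}$ and $q^{1/p}$, so $P_n(\mu)=\alpha\,q^{n/p'}+\beta\,q^{n/p}$, and fitting $\alpha,\beta$ to $P_1(\mu),P_2(\mu)$ gives $\alpha=(q^{2/p'}-1)/(q^{2/p'}-q)$ together with the symmetric expression for $\beta$. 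Dividing $P_n(\mu)$ by $|S(n)|$ reproduces \eqref{eq: pyt} with $C(t)=\tfrac{q}{2k}(q^{2t}-1)/(q^{2t}-q)$; the simple pole of $C$ at $t=\tfrac12$ recovers \eqref{eq: coh} as the $p\to2$ limit by l'H\^opital, a reassuring consistency check. The lower bound $\|\lambda_{S(n)}\|_{p\to p}\ge P_n(\mu)/|S(n)|$ is then routine: truncating $f(m)=q^{-m/p}$ to $m\le N$ makes $\|\phi\|_p^p\asymp N$ while the boundary defects at $m=0,N$ contribute only $O(1)$, so $\phi$ is an approximate eigenvector of $A_n$ for the eigenvalue $P_n(\mu)$ as $N\to\infty$.

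The main obstacle is the matching upper bound for $p\neq2$, where there is no spectral radius to exploit: $\lambda_{S(n)}=P_n(A_1)/|S(n)|$ is a polynomial in a non-normal operator, so its $\ell^p$-norm is not simply $\max_{\mathrm{spec}}|P_n|$, and one must also rule out that non-radial functions enlarge the norm. What is needed is that the $\ell^p$-convolution norm of the radial measure $|S(n)|^{-1}\ct_{S(n)}$ is controlled by its spherical transform, with the supremum over the $\ell^p$-tempered spectrum attained at the edge point $\mu=q^{1/p'}+q^{1/p}$ rather than at an interior or complementary-series value. This is exactly the content of $L^p$-spherical harmonic analysis on the $2k$-regular tree (Fig\`a-Talamanca--Picardello, Cowling--Steger) or, concretely, of the Pytlik--Szwarc analytic family of uniformly bounded representations of $\G$, through which $\lambda_{S(n)}$ is estimated by evaluating the associated spherical function on $S(n)$. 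I would invoke that machinery to handle non-radial functions and to localize the norm at the edge, paralleling the endpoint maximization that settles the case $p=2$.
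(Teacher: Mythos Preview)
The paper does not prove this theorem: it is quoted as a classical result, attributed to Cohen \cite{Coh} for $p=2$ and to Pytlik \cite{Pyt81, Pyt84} for general $p$, with no argument supplied in the text. There is therefore no proof in the paper to compare your proposal against.

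On the proposal itself: the $p=2$ argument is essentially complete and correct --- the radial recurrence $A_1A_n=A_{n+1}+qA_{n-1}$, the reduction via self-adjoint functional calculus to $\max_{[-2\sqrt q,\,2\sqrt q]}|P_n|$, and the Chebyshev endpoint check all work as you describe, and the arithmetic matches \eqref{eq: coh}. For $p\in(1,2)$, your lower bound via the truncated radial exponential $f(m)=q^{-m/p}$ is the right idea, and your identification of $C(t)$ is consistent with Pytlik's formula. But you yourself flag the matching upper bound as the real obstacle, and you then defer it to the $L^p$ spherical analysis on the tree or to the Pytlik--Szwarc analytic family of uniformly bounded representations. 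That body of work is precisely what Pytlik built in \cite{Pyt81, Pyt84} in order to obtain \eqref{eq: pyt} in the first place, so at that step you are invoking the theorem rather than proving it. As a sketch of where the formula comes from, this is fine and in line with the paper's own treatment; as a self-contained proof, the $\ell^p$ upper bound --- controlling non-radial functions and showing the convolution norm is realized at the edge parameter $\mu=q^{1/p'}+q^{1/p}$ --- remains an unfilled gap.
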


Kesten's formula, mentioned above, is the case $n=1$ of Cohen's formula \eqref{eq: coh}. Pytlik's approach in \cite{Pyt81} also recovers Cohen's formula \eqref{eq: coh}. But the key outcome of Pytlik's approach is formula \eqref{eq: pyt}; this is the only result known to us that addresses the $p$-operator norm of an averaging operator for $p\neq 2$.

The Cohen--Pytlik formulas are undoubtedly remarkable. It has to be acknowledged, however, that such exact computations are extremely rare, and they owe to very special circumstances--free group, standard word-length. (There is one more exact computation we know of, due to Cartwright and M\l{}otkowski \cite{CM}; it addresses groups acting on triangle buildings, for $p=2$.) It is not at all clear how to extend the Cohen--Pytlik formulas  to other groups. The fragility of the formulas \eqref{eq: coh} and \eqref{eq: pyt} is apparent, even over a free group, as soon as we consider changing the word-length, or replacing the sphere $S(n)$ by the ball $B(n)$.

Our contention is that the Cohen--Pytlik exact formulas become more meaningful when viewed in a simplified, asymptotic form. Let us explain what asymptotic equivalence means, for we will use it throughout the paper. Given two functions $f_1,f_2:\N\to (0,\infty)$, we write $f_1\asymp f_2$ if there are positive constants $c,C$ so that $c f_1(n)\leq f_2(n)\leq C f_1(n)$ for all $n\in \N$. The asymptotic viewpoint on the Cohen--Pytlik exact formulas is in agreement with the general philosophy of geometric group theory; after all, word-lengths on finitely generated groups are asymptotically equivalent in an analogous way. 

\begin{thm}[Cohen--Pytlik, asymptotic form]\label{CP} Let $\G$ be a non-abelian free group, endowed with the standard word-length. Then
\begin{align}\label{eq: cohen}
\big\|\lambda_{S(n)}\big\|_{2\to 2}&\asymp (n+1)\:|S(n)|^{-1/2},
\end{align}
and, for $p\in (1,2)$,
\begin{align}\label{eq: pytlik}
\big\|\lambda_{S(n)}\big\|_{p\to p}&\asymp|S(n)|^{-1/p'}.
\end{align}
\end{thm}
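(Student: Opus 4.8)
The plan is to read off both asymptotic equivalences directly from the exact Cohen--Pytlik formulas \eqref{eq: coh} and \eqref{eq: pyt}, using only the elementary fact that sphere sizes in a free group grow geometrically. For a free group on $k\geq 2$ generators one has $|S(n)|=2k(2k-1)^{n-1}$ for $n\geq 1$, and $|S(0)|=1$, so that $|S(n)|\asymp (2k-1)^n$ throughout $\N$, and hence $|S(n)|^{-1/q}\asymp (2k-1)^{-n/q}$ for any fixed exponent $q$. This lets me convert every power of $(2k-1)$ appearing in the exact formulas into the corresponding power of $|S(n)|$, up to multiplicative constants, which is exactly the translation the asymptotic statement calls for.

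For the case $p=2$ I would start from \eqref{eq: coh} and observe that the polynomial factor $(1-1/k)n+1$ is asymptotically equivalent to $n+1$. Dividing by $n+1$ rewrites this ratio as $(1-1/k)+\tfrac{1/k}{n+1}$, which is decreasing in $n$ and stays in the interval $[1-1/k,1]\subseteq[1/2,1]$ for all $n\in\N$. Combined with $(2k-1)^{-n/2}\asymp |S(n)|^{-1/2}$, this yields \eqref{eq: cohen} immediately.

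For $p\in(1,2)$ the exact formula \eqref{eq: pyt} is a sum of two geometric terms, with exponents $-n/p'$ and $-n/p$. Since $1<p<2<p'$ gives $1/p'<1/p$, the term $(2k-1)^{-n/p}$ decays strictly faster than $(2k-1)^{-n/p'}$, so the first term dominates. For the upper bound this is all that is needed: bounding both terms by $\big(|C(1/p)|+|C(1/p')|\big)(2k-1)^{-n/p'}$ and using $(2k-1)^{-n/p'}\asymp |S(n)|^{-1/p'}$ gives $\|\lambda_{S(n)}\|_{p\to p}\leq C\,|S(n)|^{-1/p'}$ for a suitable constant $C$, with no need to know the sign of $C(1/p)$.

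The one point requiring care is the matching lower bound, and this is where I expect the only real obstacle to lie. Extracting a lower bound from \eqref{eq: pyt} would require verifying that the leading coefficient $C(1/p)$ is strictly positive, a fact about an explicit rational expression that is awkward to pin down and is exactly the kind of fragility the asymptotic viewpoint is meant to avoid. The clean way around this is to invoke the trivial lower bound \eqref{eq:triv-lower}, which gives $\|\lambda_{S(n)}\|_{p\to p}\geq |S(n)|^{-1/p'}$ unconditionally. Together with the upper bound above, this establishes \eqref{eq: pytlik}. Thus the obstacle is more cosmetic than substantive: once the lower bound is sourced from the general principle rather than from the fragile exact coefficients, the proof is a routine asymptotic simplification of the Cohen--Pytlik formulas.
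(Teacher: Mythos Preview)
Your proposal is correct and matches the paper's own (implicit) derivation: the asymptotic form is presented without a separate proof, as an immediate simplification of the exact formulas \eqref{eq: coh} and \eqref{eq: pyt} via $|S(n)|\asymp (2k-1)^n$, and the paper itself notes that for $p\in(1,2)$ the estimate \eqref{eq: pytlik} asymptotically matches the trivial lower bound \eqref{eq:triv-lower}. Your handling of the lower bound via \eqref{eq:triv-lower} rather than a sign analysis of $C(1/p)$ is exactly in the spirit of the paper's asymptotic viewpoint.
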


To clarify, formulas \eqref{eq: cohen} and \eqref{eq: pytlik} describe the behavior of the $p$-operator norms $\|\lambda_{S(n)}\|_{p\to p}$ as $n$ increases; the implied multiplicative constants depend on the rank of $\G$ and on $p$, but not on $n$.

We highlight three aspects of the formulas \eqref{eq: cohen} and \eqref{eq: pytlik}. Firstly, they exhibit an interesting discrepancy: the case $p=2$ has an additional radial factor. We do not have a conceptual explanation for this discontinuity at $p=2$. Secondly, they witness that interpolation may not yield optimal bounds, for the estimate \eqref{eq: pytlik} is better than what \eqref{eq: interpol} and \eqref{eq: cohen} would predict. Thirdly, we see that, for $p\in (1,2)$, the estimate \eqref{eq: pytlik} asymptotically matches the trivial lower bound \eqref{eq:triv-lower}. 

In turns out that the Cohen--Pytlik asymptotic formulas \eqref{eq: cohen} and \eqref{eq: pytlik} hold, much more generally, for Gromov hyperbolic groups equipped with any word-length function. The case $p=2$ is an instance of a more general result from \cite{Nic17}.

 \begin{thm}[\cite{Nic17}]\label{thm: haag} Let $\G$ be a non-elementary hyperbolic group, endowed with a word-length. Then:
\begin{align*}
\big\|\lambda_{S(n)}\big\|_{2\to 2}&\asymp (n+1)\:|S(n)|^{-1/2}.
\end{align*}
\end{thm}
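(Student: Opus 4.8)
The plan is to estimate $\|\lambda_{S(n)}\|_{2\to 2}$ via the spectral-theoretic identity $\|\lambda_{S(n)}\|_{2\to 2} = \|\lambda_{S(n)}\|_{C^*_r\G}$, which reduces the problem to controlling the convolution operator norm of the symmetric function $|S(n)|^{-1}\ct_{S(n)}$. Since $S(n)$ is symmetric, $\lambda_{S(n)}$ is self-adjoint on $\ell^2\G$, so its operator norm equals its spectral radius, and one natural route is to compute $\lim_k \|\lambda_{S(n)}^{*k}\|^{1/2k}$-type quantities, but this is awkward for a fixed sphere. Instead I would work directly with a Haagerup-type inequality, which is the right tool for hyperbolic groups and is the source cited in \cite{Nic17}.

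First I would recall the Haagerup inequality in the form adapted to hyperbolic groups: there is a constant $C$ and a polynomial weight so that for any function $f$ supported on the sphere $S(n)$, one has $\|\lambda_f\|_{2\to 2}\leq C\,(n+1)\,\|f\|_2$. Applying this to $f=|S(n)|^{-1}\ct_{S(n)}$ gives $\|f\|_2 = |S(n)|^{-1}\cdot|S(n)|^{1/2}=|S(n)|^{-1/2}$, which immediately yields the upper bound
\begin{align*}
\big\|\lambda_{S(n)}\big\|_{2\to 2}\leq C\,(n+1)\,|S(n)|^{-1/2}.
\end{align*}
This half of the asymptotic equivalence is the routine direction, since the Haagerup inequality for hyperbolic groups is standard once one has the appropriate word-length bookkeeping.

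The substantive direction is the matching lower bound $\|\lambda_{S(n)}\|_{2\to 2}\geq c\,(n+1)\,|S(n)|^{-1/2}$, and this is where I expect the main obstacle. The trivial lower bound \eqref{eq:triv-lower} only gives the factor $|S(n)|^{-1/2}$ and completely misses the radial factor $(n+1)$, so one must exhibit a test function that sees the linear growth. The idea I would pursue is to build a test function supported on a ball of radius comparable to $n$ by summing characteristic functions of spheres with slowly varying coefficients, exploiting that $\lambda_{S(n)}$ maps $\ct_{S(m)}$ into a function spread across spheres $S(j)$ with $|m-n|\leq j\leq m+n$ in a way governed by the multiplicative structure of spheres in a hyperbolic group. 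Concretely, the combinatorics of geodesic concatenation and the Gromov product control how much cancellation occurs, and the near-tree behavior of a hyperbolic group at large scale means that, after accounting for bounded backtracking, the convolution $\ct_{S(m)}\ast\ct_{S(n)}$ places mass of the expected order on $S(m+n)$. Summing $\sim n$ such spheres with unit coefficients and normalizing in $\ell^2$ produces the extra $(n+1)$ factor.

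The technical heart, and the step most likely to resist a short argument, is controlling the cancellation in $\lambda_{S(n)}^{\ast k}$ or in the relevant bilinear pairing: one must show that the positive contributions along geodesics dominate and are not destroyed by the fellow-traveling corrections inherent to the coarse, non-tree structure of a general hyperbolic group. I would handle this using the standard estimates on the number of geodesics between points and on the growth function $|S(n)|\asymp \omega^n$ for some $\omega>1$ (purely exponential growth of non-elementary hyperbolic groups), together with a thin-triangles argument to bound error terms. Since the precise form of this lower bound is exactly the content extracted in \cite{Nic17}, I would ultimately cite that source for the delicate estimate while reconstructing the test-function scheme above to make the origin of the radial factor transparent.
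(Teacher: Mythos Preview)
The paper does not prove this theorem; it is quoted from \cite{Nic17}, and the surrounding text summarizes the strategy used there: the upper bound comes from property RD (the Haagerup inequality), and the matching lower bound is obtained \emph{by using the boundary of $\G$}. Your upper-bound argument is exactly this and is correct.

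Your lower-bound sketch, however, takes a different route from the one the paper attributes to \cite{Nic17}. You propose radial test functions built as sums of spherical characteristic functions and hope that the convolution combinatorics of $\ct_{S(m)}\ast\ct_{S(n)}$, summed over roughly $n$ values of $m$, will manufacture the missing factor $(n+1)$. The actual mechanism in \cite{Nic17}, as the paper states, is boundary-based: one works with the Gromov boundary (Patterson--Sullivan measures, the quasi-regular representation, or an equivalent device) and reads off the linear factor from a concrete computation there. That approach bypasses the delicate control of how $\ct_{S(m)}\ast\ct_{S(n)}$ distributes its mass across spheres in a general hyperbolic group, which is precisely the step your sketch leaves open. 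In the free group your scheme can be made to work because the radial convolution algebra is commutative with explicit structure constants; for a general hyperbolic group the ``fellow-traveling corrections'' you mention do not obviously wash out, and you give no estimate showing they are harmless. Since you ultimately defer the ``delicate estimate'' to \cite{Nic17} anyway, the proposal is an outline rather than a proof, and the outline does not match the method actually used there.
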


In this paper we handle the case $p\in (1,2)$.

 \begin{thm} \label{thm: main}
 Let $\G$ be a non-elementary hyperbolic group, endowed with a word-length, and let $p\in (1,2)$. Then:
\begin{align*}
\big\|\lambda_{S(n)}\big\|_{p\to p}&\asymp |S(n)|^{-1/p'}.
\end{align*}
\end{thm}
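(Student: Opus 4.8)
Since the lower bound $\|\lambda_{S(n)}\|_{p\to p}\geq |S(n)|^{-1/p'}$ is already recorded in \eqref{eq:triv-lower}, the entire task is the matching upper bound $\|\lambda_{S(n)}\|_{p\to p}\lesssim |S(n)|^{-1/p'}$. The first thing to note is that a direct appeal to interpolation is doomed: feeding Theorem~\ref{thm: haag} into \eqref{eq: interpol} only yields $\|\lambda_{S(n)}\|_{p\to p}\lesssim (n+1)^{2/p'}|S(n)|^{-1/p'}$, which overshoots by a polynomial factor. Removing that factor is the whole point, so I would abandon interpolation and argue directly. The starting observation is that $\lambda_{S(n)}$ is a genuine \emph{sphere-averaging} operator: writing $d(x,y)=l(xy^{-1})$ for the (symmetric) right-invariant word metric and $N=|S(n)|$, the substitution $y=g^{-1}x$ turns $\lambda_{S(n)}$ into the Markov operator
\[
T\phi(x)=\frac{1}{N}\sum_{d(x,y)=n}\phi(y),
\]
averaging $\phi$ over the metric sphere of radius $n$ about $x$. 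Since $T$ has a nonnegative symmetric kernel, its norm is attained on nonnegative functions, and the point mass at $e$ already realizes the lower bound; the content is that nothing beats it by more than a constant.

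The plan is to prove the upper bound by a weighted $\ell^p$ Schur test on the kernel $K(x,y)=N^{-1}\ct[d(x,y)=n]$, using the radial exponential weight $h(x)=s^{l(x)}$ for a parameter $s\in(0,1)$ to be tuned. For a nonnegative symmetric kernel the test reduces everything to two pointwise super-eigenfunction estimates, $T(h^{p})\leq M_p\,h^{p}$ and $T(h^{p'})\leq M_{p'}\,h^{p'}$, after which $\|T\|_{p\to p}\leq M_{p'}^{1/p'}M_p^{1/p}$. The heart of the matter is therefore to bound, up to uniform constants, the ratio $T(\sigma^{l(\cdot)})/\sigma^{l(\cdot)}$ as a function of the base $\sigma\in(0,1)$. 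I expect a clean dichotomy governed by the growth rate $\mu$ (for which $|S(m)|\asymp \mu^{m}$): for $\sigma>\mu^{-1/2}$ the sphere-sum is dominated by its outgoing part and the ratio is $\lesssim \sigma^{n}$, while for $\sigma<\mu^{-1/2}$ it is dominated by the incoming (backtracking) part and the ratio is $\lesssim (\mu\sigma)^{-n}$. Applying this with $\sigma=s^{p}$ and $\sigma=s^{p'}$, and choosing $s$ in the open window $\mu^{-1/(2p)}<s<\mu^{-1/(2p')}$ — nonempty precisely because $p<2$ — puts $h^{p}$ in the outgoing regime and $h^{p'}$ in the incoming one, whence $M_{p'}^{1/p'}M_p^{1/p}\lesssim (\mu^{-n}s^{-p'n})^{1/p'}(s^{pn})^{1/p}=\mu^{-n/p'}\asymp |S(n)|^{-1/p'}$, as required. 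It is reassuring that this window collapses to a point as $p\uparrow 2$, where the constants blow up; this is exactly the onset of the extra radial factor in Theorem~\ref{thm: haag}.

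What makes the super-eigenfunction estimate work is a purely geometric counting statement, and this is where I expect the real difficulty to lie. One needs, uniformly in $x$ and $i$ and up to bounded additive error in the exponent, an upper bound of the form
\[
\#\{g\in S(n): l(g^{-1}x)=l(x)+n-2i\}\lesssim \mu^{\,n-i},
\]
expressing that the elements of $S(n)$ whose geodesics fellow-travel $[e,x]$ to depth $i$ before diverging are governed by a residual sphere of radius $n-i$. Only this one-sided count is needed, since the Schur test delivers an upper norm bound. In a tree the count is exact; in a general hyperbolic group it must be extracted from thin-triangle fellow-traveling together with Coornaert's theorem that spheres grow purely exponentially, $|S(m)|\asymp \mu^{m}$. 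Making the count uniform in $x$ — and checking that for small $l(x)$ the truncated sum $i\leq l(x)$ only decreases the ratio, so that the supremum is attained on the tail $l(x)\geq n$ — is the technical core of the argument.

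Once that count is in place the rest is robust: the two regimes of the ratio follow from summing the geometric series $\sum_i(\mu\sigma^{2})^{-i}$, the Schur test is immediate, and the optimization over $s$ produces the sharp exponent with no polynomial loss. I would finally emphasize that the argument is naturally axiomatic — it uses only pure-exponential sphere growth and the fellow-traveling count — so it can be packaged as a general theorem about length functions of this type, with the hyperbolic case obtained by verifying these two hypotheses via Coornaert's theorem and thin triangles.
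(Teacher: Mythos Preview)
Your proposal is correct and follows essentially the same route as the paper. The paper packages your weighted Schur test as a ``cocycle bound'' (Lemma~\ref{lem: cocycle}) applied to the Busemann cocycle $\beta(g)(h)=l(h)-l(g^{-1}h)$---which is exactly the logarithm of your weight ratio $s^{l(y)}/s^{l(x)}$---carries out your fellow-traveling count via rough medians in Lemma~\ref{lem: work}, and picks the specific parameter $\e=\delta/(pp')$ (equivalently $s=\mu^{-1/(pp')}$), which lies in your window $(\mu^{-1/(2p)},\mu^{-1/(2p')})$.
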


The strategy towards Theorem~\ref{thm: main} is very different from the one used in \cite{Nic17} in order to obtain Theorem~\ref{thm: haag}. In the case $p=2$, the upper bound is granted by property RD, and the issue is to obtain a matching lower bound; this is achieved by using the boundary of $\G$. For the range $p\in (1,2)$, it is the lower bound that is known--namely, it is the trivial lower bound \eqref{eq:triv-lower}--and we are aiming for a matching upper bound. It is unclear whether the boundary of $\G$ could be used to this end. Instead, we adapt Pytlik's approach to \eqref{eq: pyt}. Although the proof of Theorem~\ref{thm: main} is ultimately carried out in a different way, our use of the Busemann cocyle (a group cocycle on $\G$) is inspired by Pytlik's use of the Poisson kernel (a group cocycle on the boundary of $\G$). By avoiding the boundary, we are actually able to formulate a much more general result--see Theorem~\ref{thm: spheres}.

Estimates for $p$-operator norms of an averaging operator $\lambda_S$ have an interesting by-product: estimates for the combinatorial expansion of the subset $S$. Here is an informal description of what this means; we refer to Section~\ref{sec: exp} for the precise definition. Consider a product set $SX=\{sx: s\in S, x\in X\}\subseteq \G$, where $X$ is an arbitrary finite subset of $\G$; trivially, we have the upper bound $|SX|\leq |S||X|$. The combinatorial expansion of $S$ encodes lower bounds for $|SX|$ relative to $|X|$, uniformly in $X$.

Informally, the next result says that the sequence of spheres in a hyperbolic group forms an asymptotic expander.
 
\begin{thm}\label{thm: exphyp}
Let $\G$ be a non-elementary hyperbolic group, endowed with a word-length. Then there exists a constant $c\in (0,1)$ such that, for each $n$, we have
\[|S(n)X|\geq c |S(n)||X|\]
for any finite subset $X\subseteq \G$. 
\end{thm}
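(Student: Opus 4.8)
The plan is to deduce Theorem~\ref{thm: exphyp} from the operator-norm estimate of Theorem~\ref{thm: main} (for any fixed $p\in(1,2)$) by means of a soft Hölder duality argument. The crux is a general inequality, valid for an \emph{arbitrary} finite set $S\subseteq\G$, relating the size of the product set to the $p$-operator norm of $\lambda_S$:
\[|SX|\geq \|\lambda_S\|_{p\to p}^{-p'}\:|X| \qquad\text{for every finite } X\subseteq\G.\]
Granting this, I would apply it with $S=S(n)$ and invoke Theorem~\ref{thm: main}: there is a constant $C$, \emph{independent of $n$}, with $\|\lambda_{S(n)}\|_{p\to p}\leq C\,|S(n)|^{-1/p'}$, so that $\|\lambda_{S(n)}\|_{p\to p}^{-p'}\geq C^{-p'}\,|S(n)|$. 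Taking $c=C^{-p'}$ then yields $|S(n)X|\geq c\,|S(n)|\,|X|$ uniformly in $n$ and in $X$. The constant $c$ is positive, and it is at most $1$ because the trivial bound $|S(n)X|\leq |S(n)||X|$ forces any admissible constant to be $\leq 1$; shrinking slightly if necessary places $c\in(0,1)$.

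For the general inequality I would argue as follows. Consider the characteristic function $\ct_X$. The function $\lambda_S(\ct_X)$ is supported on the product set $SX$, since $\lambda_S(\ct_X)(h)\neq 0$ forces $g^{-1}h\in X$ for some $g\in S$, that is, $h\in SX$. Two norm facts then pin it down. First, since $\lambda_S$ is an averaging operator it preserves the $\ell^1$ norm of a non-negative function, giving $\|\lambda_S(\ct_X)\|_1=\|\ct_X\|_1=|X|$. Second, Hölder's inequality tested against $\ct_{SX}$ yields
\[|X|=\|\lambda_S(\ct_X)\|_1\leq \|\lambda_S(\ct_X)\|_p\:\|\ct_{SX}\|_{p'}=\|\lambda_S(\ct_X)\|_p\:|SX|^{1/p'}.\]
Finally, boundedness of $\lambda_S$ on $\ell^p\G$ gives $\|\lambda_S(\ct_X)\|_p\leq \|\lambda_S\|_{p\to p}\:\|\ct_X\|_p=\|\lambda_S\|_{p\to p}\:|X|^{1/p}$. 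Combining the last two displays and simplifying the exponent via $1-1/p=1/p'$ produces the claimed inequality.

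The genuine difficulty is entirely contained in Theorem~\ref{thm: main}; the argument above is formal. The only points demanding care are the uniformity in $n$, which is automatic since the constant $C$ of Theorem~\ref{thm: main} is independent of $n$, and the bookkeeping verifying $c\in(0,1)$. It is worth emphasizing that this scheme produces asymptotic expansion for \emph{any} sequence of finite sets whose $p$-operator norm matches the trivial lower bound \eqref{eq:triv-lower}; for spheres, this matching is exactly the content of \eqref{eq: pytlik} in the free case and of Theorem~\ref{thm: main} in general. I therefore expect no substantive obstacle here beyond correctly transporting the norm estimate through the duality inequality.
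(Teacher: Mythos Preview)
Your proposal is correct and follows essentially the same route as the paper: the paper isolates the general inequality $e(S)\geq \|\lambda_S\|_{p\to p}^{-p'}$ as a lemma in Section~\ref{sec: exp}, proved via the duality pairing $\la \lambda_S(\ct_X),\ct_{SX}\ra=|X|$ combined with H\"older, and then combines it with Theorem~\ref{thm: main} exactly as you do. Your derivation of the key inequality via $\|\lambda_S(\ct_X)\|_1=|X|$ and support in $SX$ is a cosmetic rephrasing of the same computation.
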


The proofs of Theorems~\ref{thm: main} and ~\ref{thm: exphyp} are completed in Section~\ref{sec: hyp}.

%%%%%%%%%%%%%%%%%%%%

\section{The cocycle bound}
Let $\G$ be a discrete countable group, and $p\in (1,\infty)$. The left regular representation $\lambda$ of $\G$ on $\ell^p\G$ is the isometric representation given by $\lambda(g)\phi=g.\phi$, where $g.\phi(h)=\phi(g^{-1}h)$. We extend $\lambda$ by linearity to the group algebra $\C\G$, setting
\begin{align*}
\lambda(a)=\sum_{g\in \G} a(g)\lambda(g)
\end{align*}
for any finitely supported function $a:\G\to \C$. 

The main result of this section is an upper bound on the $p$-operator norm $\|\lambda(a)\|_{p\to p}$, in terms of an additional ingredient.

Let $\kappa: \G\to \ell^\infty\G$ be a cocycle. This means that the identity
\begin{align}\label{eq: cocycle}
\kappa(g_1g_2)=\kappa(g_1)+g_1. \kappa(g_2)
\end{align}
holds for all $g_1,g_2\in \G$. The cocycle identity implies that $\kappa(1)=0$, and that
\begin{align}\label{eq: switch}
\kappa(g^{-1})=-g^{-1}.\kappa(g)
\end{align}
for all $g\in \G$. 

Let the \emph{$\kappa$-norm} of a finitely supported function $a:\G\to \C$ be given as follows:
\begin{align}
N_\kappa(a)=\sup_{h\in \G}\: \sum_{g\in \G} \big|a(g)\big| \:e^{\kappa(g)(h)}= \bigg\|\sum_{g\in \G} \big|a(g)\big| \:e^{\kappa(g)}\bigg\|_\infty.
\end{align}

Evidently, $N_\kappa$ is a ($\C$-vector space) norm on $\C\G$; this holds for any map $\kappa: \G\to \ell^\infty\G$. When $\kappa$ is a cocycle, it can be checked that $N_\kappa$ is also submultiplicative. Therefore the $\kappa$-norm $N_\kappa$ is an algebra norm on $\C\G$. 

For the trivial cocycle $\kappa=0$ we have $N_\kappa(a)=\|a\|_1$. For a general cocycle $\kappa$, we might think of the $\kappa$-norm $N_\kappa$ as a twisted $\ell^1$-norm on $\C\G$. 

\begin{lem}\label{lem: cocycle} Let $p\in (1,\infty)$, with conjugate exponent $p'$. For any finitely supported function $a:\G\to \C$, we have
\begin{align}\label{eq: cocyclebound}
\|\lambda(a)\|_{p\to p} \leq N_{p\kappa}(a^*)^{1/p}\: N_{p'\kappa}(a)^{1/p'}.
\end{align}
\end{lem}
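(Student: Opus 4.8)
The plan is to run a single weighted Hölder estimate directly on the convolution kernel of $\lambda(a)$, using the exponential $e^{\kappa(g)(h)}$ supplied by the cocycle as the splitting weight. Recall that $\lambda(a)$ acts by $\lambda(a)\phi(h)=\sum_{g\in\G} a(g)\,\phi(g^{-1}h)$, so pointwise $|\lambda(a)\phi(h)|\le \sum_{g} |a(g)|\,|\phi(g^{-1}h)|$. I would factor each summand as
\[
|a(g)|\,|\phi(g^{-1}h)| = \Big(|a(g)|^{1/p'}\, e^{\kappa(g)(h)}\Big)\Big(|a(g)|^{1/p}\, e^{-\kappa(g)(h)}\,|\phi(g^{-1}h)|\Big)
\]
and apply Hölder in $g$ with exponents $p'$ and $p$. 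The exponent on the weight is dictated by the requirement that the first factor reproduce the target quantity exactly: raising it to the $p'$-th power gives $\sum_{g} |a(g)|\, e^{p'\kappa(g)(h)}\le N_{p'\kappa}(a)$, uniformly in $h$. This pulls out $N_{p'\kappa}(a)^{1/p'}$ immediately.

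What remains is $|\lambda(a)\phi(h)|\le N_{p'\kappa}(a)^{1/p'}\big(\sum_{g} |a(g)|\, e^{-p\kappa(g)(h)}\,|\phi(g^{-1}h)|^p\big)^{1/p}$. I would then raise to the $p$-th power, sum over $h$, and interchange the (nonnegative, finitely supported) summations to obtain
\[
\|\lambda(a)\phi\|_p^p \le N_{p'\kappa}(a)^{p/p'} \sum_{g} |a(g)| \sum_{h} e^{-p\kappa(g)(h)}\,|\phi(g^{-1}h)|^p .
\]
In the inner sum I substitute $h=gh'$ and invoke the switch formula \eqref{eq: switch}: since $(g^{-1}.\kappa(g))(h')=\kappa(g)(gh')$, it gives $-\kappa(g)(gh')=\kappa(g^{-1})(h')$, so the weight converts to $e^{p\kappa(g^{-1})(h')}$.

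A second interchange of summation then isolates, for each fixed $h'$, the quantity $\sum_{g} |a(g)|\, e^{p\kappa(g^{-1})(h')}$; replacing $g$ by $g^{-1}$ and using $|a^*(g)|=|a(g^{-1})|$ turns this into $\sum_{g} |a^*(g)|\, e^{p\kappa(g)(h')}\le N_{p\kappa}(a^*)$, again uniformly in $h'$. Hence $\|\lambda(a)\phi\|_p^p\le N_{p'\kappa}(a)^{p/p'}\,N_{p\kappa}(a^*)\,\|\phi\|_p^p$, and taking $p$-th roots yields \eqref{eq: cocyclebound} on the dense subspace of finitely supported $\phi$, hence on all of $\ell^p\G$. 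The computation is essentially bookkeeping once the split is fixed; the one place to be careful is tracking the cocycle through the change of variables $h=gh'$ — namely, checking via \eqref{eq: switch} that the weight $e^{-p\kappa(g)(gh')}$ produced there is exactly the $e^{p\kappa(g^{-1})(h')}$ needed to recognize $N_{p\kappa}(a^*)$, rather than some mismatched shift of it.
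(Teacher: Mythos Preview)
Your proof is correct and follows essentially the same approach as the paper's: the same weighted H\"older split $|a(g)|=|a(g)|^{1/p'}|a(g)|^{1/p}$ with the exponential $e^{\pm\kappa(g)(h)}$, the same change of variable $h\mapsto gh$ in the inner sum, and the same use of \eqref{eq: switch} followed by $g\mapsto g^{-1}$ to recognize $N_{p\kappa}(a^*)$. There is no substantive difference between the two arguments.
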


Here $a^*:\G\to \C$ is the function given by $a^*(g)=\overline{a(g^{-1})}$. For the trivial cocycle $\kappa=0$, the bound \eqref{eq: cocyclebound} is simply the trivial upper bound $\|\lambda(a)\|_{p\to p} \leq \|a\|_1$. Thus \eqref{eq: cocyclebound} could be thought of as a twisting of the trivial upper bound.

\begin{proof} Let $\phi\in \ell^p\G$. Then
\begin{align*}
\lambda(a)\phi(h)=\sum_{g\in \G} a(g)\phi(g^{-1}h).
\end{align*}
Using H\"older's inequality, we have:
\begin{align*}
\big|\lambda(a)\phi(h)\big|&\leq \sum_{g\in \G} \big|a(g)\big|\big|\phi(g^{-1}h)\big|\\
&=\sum_{g\in \G} \big|a(g)\big|^{1/p'} e^{\kappa(g)(h)}\:\big|a(g)\big|^{1/p}\big|\phi(g^{-1}h)\big| \: e^{-\kappa(g)(h)}\\
&\leq \Big(\sum_{g\in \G} \big|a(g)\big|\: e^{p' \kappa(g)(h)}\Big)^{1/p'} \Big(\sum_{g\in \G}  \big|a(g)\big|\big|\phi(g^{-1}h)\big|^pe^{-p \kappa(g)(h)}\Big)^{1/p}\\
&\leq N_{p'\kappa}(a)^{1/p'}\:\Big(\sum_{g\in \G} \big|a(g)\big|\big|\phi(g^{-1}h)\big|^p e^{-p \kappa(g)(h)} \Big)^{1/p}.
\end{align*}
We deduce that
\begin{align*}
\|\lambda(a)\phi\|_p&=\Big(\sum_{h\in \G} \big|\lambda(a)\phi(h)\big|^p\Big)^{1/p}\\
&\leq N_{p'\kappa}(a)^{1/p'}\: \Big(\sum_{h\in \G} \sum_{g\in \G} \big|a(g)\big| \big|\phi(g^{-1}h)\big|^pe^{-p \kappa(g)(h)} \Big)^{1/p}.
\end{align*}
Next, we estimate the latter double sum. We write
\begin{align*}
\sum_{h\in \G} \sum_{g\in \G} \big|a(g)\big| \big|\phi(g^{-1}h)\big|^pe^{-p \kappa(g)(h)}&=\sum_{g\in \G}\big|a(g)\big|\sum_{h\in \G}  \big|\phi(g^{-1}h)\big|^p e^{-p \kappa(g)(h)}\\
&=\sum_{g\in \G}\big|a(g)\big|\sum_{h\in \G} \big|\phi(h)\big|^p e^{-p \kappa(g)(gh)}\\
&=\sum_{h\in \G} \Big( \sum_{g\in \G}\big|a(g)\big|\: e^{-p \kappa(g)(gh)}\Big)\: \big|\phi(h)\big|^p.
\end{align*}

In the second step of the above computation, we have made the change of variable $h:=gh$ for each $g\in \G$. Now, by \eqref{eq: switch}, we have $-\kappa(g)(gh)=\kappa(g^{-1})(h)$ and so
\begin{align*}
 \sum_{g\in \G}\big|a(g)\big|\: e^{-p \kappa(g)(gh)}&= \sum_{g\in \G}\big|a(g)\big|\: e^{p \kappa(g^{-1})(h)}\\
 &=\sum_{g\in \G} \big|a^*(g)\big| \:e^{p\kappa(g)(h)}\leq N_{p\kappa}(a^*).
\end{align*}
We have made another change of variable along the way, namely $g:=g^{-1}$. We infer that
\begin{align*}
\sum_{h\in \G} \sum_{g\in \G} \big|a(g)\big| \big|\phi(g^{-1}h)\big|^pe^{-p \kappa(g)(h)} \leq N_{p\kappa}(a^*) \sum_{h\in \G} \big|\phi(h)\big|^p
\end{align*}
and, consequently, that
\begin{align*}
\|\lambda(a)\phi\|_p\leq N_{p'\kappa}(a)^{1/p'}\:N_{p\kappa}(a^*)^{1/p}\:  \|\phi\|_p.
\end{align*}
The desired bound \eqref{eq: cocyclebound} follows.
\end{proof}

The above proof only used the relation \eqref{eq: switch}, not the full power of the cocycle identity \eqref{eq: cocycle}. 
We find it natural to work with cocycles, however, since they are the main source for the relation \eqref{eq: switch}. Additionally, as we have noted, the $\kappa$-norm is an algebra norm on the group algebra $\C\G$ whenever $\kappa$ is a cocycle.

In keeping with our asymptotic viewpoint, we should consider the stability of the cocycle bound \eqref{eq: cocyclebound}. Here are two remarks. Firstly, if $\tilde \kappa$ is a cocycle which is a uniformly bounded perturbation of $\kappa$, meaning that $\sup_{g\in \G}\|\tilde \kappa(g)-\kappa(g)\|_\infty<\infty$, then the upper bound in \eqref{eq: cocyclebound} for the two cocycles, $\tilde \kappa$ and $\kappa$, is asymptotically the same. Secondly, the cocycle bound \eqref{eq: cocyclebound} can be extended without difficulty to quasi-cocycles. Recall, this means that $\kappa: \G\to \ell^\infty\G$ satisfies the cocycle identity \eqref{eq: cocycle} up to a uniformly bounded error: $\sup_{g,h\in \G}\|\kappa(gh)-\kappa(g)-g. \kappa(h)\|_\infty<\infty$.

%%%%%%%%%%%%%%%%%%%%%%%%%%%%%%%

\section{A geometric application of the cocycle bound}
As before, let $\G$ be a discrete countable group, and $p\in (1,\infty)$. We pursue the following instance of the cocycle setup, discussed in the previous section: the finitely supported function $a:\G\to \C$ is $\ct _S$, the characteristic function of the subset $S$, while the cocycle is a classical one, of geometric origin. 

Let $X$ be a metric space on which $\G$ acts properly by isometries. The distance between two points $x$ and $y$ in $X$ is denoted by $d(x,y)$. The \emph{Busemann cocycle} $\beta: \G\to \ell^\infty\G$ with respect to a basepoint $o\in X$ is defined as follows: for each $g\in \G$, $\beta(g):\G\to \C$ is the map
\[h\mapsto d(o,ho)-d(go,ho).\]
We have, indeed, that $\beta(g)\in \ell^\infty\G$; in fact, $\|\beta(g)\|_\infty=d(o,go)$. The cocycle identity, $\beta(g_1g_2)=\beta(g_1)+g_1.\beta(g_2)$ for all $g_1,g_2\in \G$, is easily checked. Note also that a change of the basepoint $o$ entails a uniformly bounded perturbation of the Busemann cocycle $\beta$. So, for the purposes of the cocycle bound \eqref{eq: cocyclebound}, the dependence of the Busemann cocycle on the basepoint is harmless.

We will actually work with scalings of the Busemann cocycle, that is, cocycles of the form $\e \beta$ where $\e$ is some positive parameter. This flexibility will allow us to optimize the cocycle bound \eqref{eq: cocyclebound}. Recall that we take $a=\ct_S$, so $a^*=\ct_{S^{-1}}$. For these choices, Lemma~\ref{lem: cocycle} implies the following.

\begin{lem}\label{lem: g-cocycle} 
Let $\beta: \G\to \ell^\infty\G$ be the Busemann cocycle with respect to a basepoint $o\in X$, and let $\e>0$. 
Then for any finite subset $S\subseteq \G$ we have
\begin{align}\label{eq: concrete}
\big\|\lambda(\ct_S)\big\|_{p\to p}\leq  \bigg\|\sum_{g\in S^{-1}} e^{p\e \beta(g)}\bigg\|_\infty^{1/p}\:  \bigg\|\sum_{g\in S} e^{p'\e \beta(g)}\bigg\|_\infty^{1/p'}.
\end{align}
\end{lem}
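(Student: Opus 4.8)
The plan is to obtain \eqref{eq: concrete} as a direct specialization of Lemma~\ref{lem: cocycle}, applied with the cocycle $\kappa=\e\beta$ and the finitely supported function $a=\ct_S$. First I would confirm that $\e\beta$ is a bona fide cocycle: the text has already verified that the Busemann cocycle $\beta$ satisfies the cocycle identity \eqref{eq: cocycle}, and since the $\G$-action on $\ell^\infty\G$ is linear we have $g_1.(\e\beta(g_2))=\e\,(g_1.\beta(g_2))$, so scaling by the positive constant $\e$ preserves \eqref{eq: cocycle}. Hence $\e\beta:\G\to\ell^\infty\G$ is again a cocycle, and Lemma~\ref{lem: cocycle} is available with $p\kappa=p\e\beta$ and $p'\kappa=p'\e\beta$. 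It yields
\[
\big\|\lambda(\ct_S)\big\|_{p\to p}\leq N_{p\e\beta}(\ct_S^*)^{1/p}\: N_{p'\e\beta}(\ct_S)^{1/p'}.
\]

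The next step is to evaluate the two $\kappa$-norms on the right. As already noted, $\ct_S^*=\ct_{S^{-1}}$, since $\ct_S^*(g)=\overline{\ct_S(g^{-1})}=\ct_S(g^{-1})$ and $\ct_S(g^{-1})=1$ precisely when $g\in S^{-1}$. Unwinding the definition of $N_\kappa$, the sum $\sum_{g\in\G}|\ct_{S^{-1}}(g)|\,e^{p\e\beta(g)}$ collapses to $\sum_{g\in S^{-1}}e^{p\e\beta(g)}$, so that $N_{p\e\beta}(\ct_{S^{-1}})=\|\sum_{g\in S^{-1}}e^{p\e\beta(g)}\|_\infty$; in the same way $N_{p'\e\beta}(\ct_S)=\|\sum_{g\in S}e^{p'\e\beta(g)}\|_\infty$. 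Substituting these two identities into the displayed bound reproduces \eqref{eq: concrete} verbatim.

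I do not expect any genuine obstacle, as the lemma is purely an instantiation of Lemma~\ref{lem: cocycle}; the only points requiring care are bookkeeping ones---pairing the exponent $p\e$ (on $S^{-1}$) with the outer power $1/p$, and $p'\e$ (on $S$) with $1/p'$, in accordance with the placement of $a^*$ versus $a$ in \eqref{eq: cocyclebound}. All the substantive content sits in the geometric input recorded just above: that $\beta(g)$ genuinely lies in $\ell^\infty\G$ with $\|\beta(g)\|_\infty=d(o,go)$, and that the scaling parameter $\e$ is free---a freedom that will later be exploited to optimize the bound.
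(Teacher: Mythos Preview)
Your proposal is correct and follows exactly the paper's approach: the paper simply notes that, with $a=\ct_S$ (so $a^*=\ct_{S^{-1}}$) and the scaled Busemann cocycle $\kappa=\e\beta$, Lemma~\ref{lem: cocycle} yields \eqref{eq: concrete}. Your write-up just makes the bookkeeping explicit, and there is nothing to add.
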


Our next concern is, of course, that of upper bounding the norms appearing on the right-hand side of \eqref{eq: concrete}. We are mainly interested in $S$ being a sphere or a ball in $\G$, with respect to the length function $l(g)=d(o,go)$ induced by some fixed basepoint $o\in X$. That is to say, we are interested in $S$ being one of the following:
\[S_o(n)=\{g\in \G: d(o,go)=n\}, \qquad B_o(n)=\{g\in \G: d(o,go)\leq n\}.\]

We now introduce some key geometric hypotheses on the metric space $X$. We start with a notation. Given two points $x,y\in X$ and $\rho\geq 0$, we consider the rough geodesic segment
\begin{align*}
[x,y]_\rho=\big\{z\in X: d(x,z)+d(z,y)\leq d(x,y)+\rho\big\}.
\end{align*}

\begin{defn}
A metric space $X$ is \emph{roughly modular} if there is some constant $\rho\geq 0$ so that the following holds: for any three points $x,y,z\in X$, there exists a point 
\begin{align*}
m\in [x,y]_\rho\cap [y,z]_\rho \cap [z,x]_\rho.
\end{align*}
Such a point $m$ is said to be a \emph{rough median} of $x,y,z$.
\end{defn}

Modularity, corresponding to the case $\rho=0$, is an already established terminology in graph theory. The adjective `rough' is the functor which relaxes metric equalities by allowing an additive bounded error. 

For the remainder of this section, the following \textbf{standing assumptions} on $X$ are in place:
\begin{itemize}[leftmargin=45pt]
\item[\textsc{mod:} ]$X$ is a roughly modular graph, with rough modularity constant $\rho$;
\item[\textsc{pol($d$):} ] there exists a non-negative integer $d$ and $C'>0$ such that, for each $x,y\in X$ and $n\in \N$ we have 
\[\big| \{z\in [x,y]_\rho: d(x,z)=n\}\big|\leq C' (n+1)^d.\]
\end{itemize}
Coincidentally, these geometric assumptions also feature in Lafforgue's proof that cocompact lattices in $\mathrm{SL}_3(\R)$ and $\mathrm{SL}_3(\C)$ satisfy property RD \cite[Defn.2.2]{Laf}; cf. the Chatterji--Ruane criterion \cite[Prop.1.7]{CR}.

\begin{lem}\label{lem: work}
Assume that the isometric action of $\G$ on $X$ satisfies the condition
\begin{itemize}[leftmargin=45pt]
\item[\textsc{exp($\delta$):}] there exist $\delta>0$ and $C>0$ such that, for each $x,y\in X$ and $n\in \N$, we have
$\big|\{g\in \G: d(x,gy)\leq n\}\big|\leq Ce^{\delta n}$.
\end{itemize}

Let $\beta$ be the Busemann cocycle with respect to a basepoint $o\in X$, and let $\e>0$. Then for any subset $S\subseteq B_o(n)$ we have:
\begin{align}\label{eq: cases}
\bigg\|\sum_{g\in S} e^{p\e \beta(g)}\bigg\|_\infty\cleq \begin{cases}
(n+1)^{d}\: e^{p\e n} & \textrm{ if } p \e >\delta/2,\\[3pt]
(n+1)^{d+1}\: e^{p\e n} & \textrm{ if } p \e =\delta/2,\\[3pt]
e^{(\delta-p\e)n} & \textrm{ if } p \e <\delta/2.
\end{cases}
\end{align}
\end{lem}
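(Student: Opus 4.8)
The plan is to fix a point $h\in\G$, abbreviate $t=p\e$ and $y=ho$, and control the single sum $\sum_{g\in S} e^{t\beta(g)(h)}$ by stratifying the elements $g\in S$ according to the integer $k=\beta(g)(h)=d(o,ho)-d(go,ho)$. Since $S\subseteq B_o(n)$ we have $l(g)=d(o,go)\le n$, and the triangle inequality forces $|k|\le l(g)\le n$, so $k$ ranges over $\{-n,\dots,n\}$. Writing $N_k=\big|\{g\in S:\beta(g)(h)=k\}\big|$, the sum becomes $\sum_{k=-n}^{n} e^{tk}N_k$, and the entire problem reduces to a good upper bound on $N_k$ that is uniform in $h$.

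The heart of the argument is to bound $N_k$ by a rough-median count. I would apply \textsc{mod} to the triple $o,ho,go$ to obtain a rough median $m$; feeding the three defining rough-geodesic inequalities into the triangle inequality yields, up to an additive $O(\rho)$ error, the two crucial estimates $d(o,m)\approx \tfrac12\big(l(g)+k\big)=:a$ and $d(m,go)\approx\tfrac12\big(l(g)-k\big)$, whence $d(m,go)\approx a-k$, while $l(g)\le n$ confines $a$ to the range $\big[\max(0,k),\,(n+k)/2\big]$ up to $O(\rho)$. One then counts the pairs $(m,g)$: for each admissible distance $a$, condition \textsc{pol} bounds the number of candidate medians $m\in[o,ho]_\rho$ with $d(o,m)=a$ by $C'(a+1)^d$, and for each such $m$ the hypothesis \textsc{exp} — applied to the two points $m$ and $o$, so that $gy=go$ — bounds the number of $g\in S$ with $d(m,go)\le a-k+O(\rho)$ by $\cleq e^{\delta(a-k)}$. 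Summing the resulting geometric series in $a$, which is dominated by its top term $a\approx(n+k)/2$, gives $N_k\cleq \big((n+k)/2+1\big)^{d}\,e^{\delta(n-k)/2}$.

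To finish I would substitute this into $\sum_{k=-n}^n e^{tk}N_k$. Pulling out $e^{\delta n/2}$ leaves the polynomial-times-geometric sum $\sum_{k}\big((n+k)/2+1\big)^{d}e^{(t-\delta/2)k}$, whose asymptotics are governed by the sign of $t-\delta/2=p\e-\delta/2$. If $p\e>\delta/2$ the sum is dominated by $k\approx n$, where the polynomial factor is $\asymp(n+1)^d$, yielding $(n+1)^d e^{p\e n}$; if $p\e=\delta/2$ the exponential weight is trivial and one simply sums $\big((n+k)/2+1\big)^d$ over $k$, gaining one extra power and hence $(n+1)^{d+1}e^{p\e n}$; if $p\e<\delta/2$ the series is dominated by $k\approx -n$, where the polynomial factor $\big((n+k)/2+1\big)^d$ is $O(1)$, so the convergent tail absorbs the polynomial and leaves exactly $e^{(\delta-p\e)n}$. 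This reproduces all three cases and explains why the polynomial factor survives in the first two but disappears in the third.

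The step I expect to be the main obstacle is the median estimate itself: establishing $d(o,m)\approx\tfrac12(l(g)+k)$ and $d(m,go)\approx\tfrac12(l(g)-k)$, and then propagating the $O(\rho)$ errors cleanly through the subsequent \textsc{pol} and \textsc{exp} counts. The delicate points are to verify that these additive errors only perturb the multiplicative constants hidden in the $\cleq$ bounds (harmless for an asymptotic statement), that the fiber bound from \textsc{exp} uses a radius $a-k$ depending only on $a$ and $k$ rather than on the individual $g$, and that overcounting elements $g$ possessing several rough medians is acceptable since only an upper bound is needed. By comparison, the concluding case analysis is a routine estimation of geometric and arithmetic–geometric sums.
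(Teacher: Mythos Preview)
Your proposal is correct and follows essentially the same strategy as the paper's proof: stratify by the cocycle value $k=\beta(g)(h)$, pick a rough median $m$ for $o,go,ho$ to pin down $d(o,m)$ and $d(m,go)$ up to $O(\rho)$, apply \textsc{pol($d$)} and \textsc{exp($\delta$)} to bound $N_k$, and finish with the same three-case geometric sum in $k$. The only cosmetic difference is that the paper first treats the spherical case $S\subseteq S_o(n)$ (so $l(g)=n$ is fixed and no inner sum over $a$ is needed) and afterwards sums over spherical shells to reach $S\subseteq B_o(n)$, whereas you work directly in the ball and absorb the varying $l(g)$ into an extra geometric sum over $a$; these are the same double sum taken in the opposite order.
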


\begin{proof} Assume first that $S\subseteq S_o(n)$. Fix $h\in \G$. 

Note that $\beta(g)(h)=d(o,ho)-d(go,ho)$ is an integer, as the distance on $X$ is integer-valued. For each $g\in S$, we have $|\beta(g)(h)|\leq d(o,go)=n$. Thus, if we consider the horospherical set
\[H(j)=\{g\in \G: \beta(g)(h)=j\},\]
then we can write
\begin{align*}
\sum_{g\in S} e^{p\e \beta(g)(h)}=\sum_{j=-n}^n  |S\cap H(j)|\:e^{p \e j}.
\end{align*}

Let $g\in S\cap H(j)$, where $j\in \{-n,\dots,n\}$. Let $m=m(g,h)\in X$ be a rough median point for the triple $o, go, ho$. For $x\in \{o, go, ho\}$, we introduce the shorthand $\delta(x)=d(m,x)$. See the sketch below.

\begin{equation*}
\xymatrix{
o \ar@{-}[rrd]^{\delta(o)}&&&&&\\
%&&&&&\\
&& m \ar@{-}[rrr]^{\delta(ho)} &&& ho\\
&&&&&\\
go \ar@{-}[rruu]_{\delta(go)} &&&&&\\}
\end{equation*}
\smallskip

We have
\begin{align*}
d(o,ho)\leq \delta(o)&+\delta(ho)\leq d(o,ho)+\rho,\\
d(go,ho)\leq \delta(go)&+\delta(ho)\leq d(go,ho)+\rho.
\end{align*}
By subtracting the two inequalities, we see that 
\[\delta(o)-\delta(go)\approx_\rho d(o,ho)-d(go,ho)=b(g)(h)=j,\]
where $\approx_\rho$ denotes the relation of being within $\rho$ of each other.  On the other hand
\[\delta(o)+\delta(go)\approx_\rho d(o,go)=n.\]
The two near equalities combine to give
\[\delta(o)\approx_\rho (n+j)/2, \qquad \delta(go)\approx_\rho (n-j)/2.\]

In view of the hypothesis \textsc{pol($d$)} on the graph $X$, the number of possibilities for $m$ is at most a constant multiple of $((n+j)/2+1)^d$. Next, by using the hypothesis \textsc{exp($\delta$)} on the action of $\G$ on $X$, we deduce that, for each such $m$, the number of possibilities for $g$ is at most a constant multiple of $e^{\delta(n-j)/2}$. Overall, we find that
\[|S\cap H(j)|\cleq \big((n+j)/2+1\big)^d\: e^{\delta(n-j)/2}\asymp (n+j+1)^d\: e^{\delta(n-j)/2} .\]
Therefore
\begin{align*}
\sum_{g\in S} e^{p \e \beta(g)(h)}\cleq& \sum_{j=-n}^n (n+j+1)^d\: e^{\delta(n-j)/2}\: e^{p \e j}\\
&=\sum_{j=0}^{2n}  (j+1)^d\: e^{\delta(2n-j)/2}\: e^{p \e (j-n)}\\
&=e^{(\delta-p \e)n}\sum_{j=0}^{2n} \:(j+1)^d\: e^{(p\e-\delta/2)j}.\end{align*}
Since the above estimate is uniform in $h\in \G$, we have
\begin{align}
\bigg\|\sum_{g\in S} e^{p\e \beta(g)}\bigg\|_\infty\cleq e^{(\delta-p \e)n}\sum_{j=0}^{2n} \:(j+1)^d\: e^{(p\e-\delta/2)j}.\label{eq: messy}
\end{align}
To estimate the latter sum, we use the following elementary asymptotics:
\begin{align}\label{eq: elem}
\sum_{j=0}^{N} \:(j+1)^{d}\: e^{\tau j} \asymp 
\begin{cases}
(N+1)^d\: e^{\tau N} & \textrm{ if } \tau >0\\[3pt]
(N+1)^{d+1} & \textrm{ if } \tau =0\\[3pt]
1 & \textrm{ if } \tau <0
\end{cases}
\end{align}
where the implicit multiplicative constants depend on $d$ and $\tau$, but not on $N$. We deduce that the right-hand side of \eqref{eq: messy} has the following asymptotics:
\begin{align*}
\begin{cases}
e^{(\delta-p\e)n}\: (2n+1)^{d}\: e^{(p\e-\delta/2)2n}\asymp (n+1)^{d}\: e^{p\e n} & \textrm{ if } p \e >\delta/2,\\[3pt]
e^{(\delta-p\e)n}\: (2n+1)^{d+1}\asymp (n+1)^{d+1}\: e^{p\e n} & \textrm{ if } p \e =\delta/2,\\[3pt]
e^{(\delta-p\e)n} & \textrm{ if } p \e <\delta/2.
\end{cases}
\end{align*}
To summarize: we have shown that, for a subset $S\subseteq S_o(n)$, we have
\begin{align}\label{eq: sph-cases}
\bigg\|\sum_{g\in S} e^{p\e \beta(g)}\bigg\|_\infty\cleq \begin{cases}
(n+1)^{d}\: e^{p\e n} & \textrm{ if } p \e >\delta/2,\\[3pt]
(n+1)^{d+1}\: e^{p\e n} & \textrm{ if } p \e =\delta/2,\\[3pt]
e^{(\delta-p\e)n} & \textrm{ if } p \e <\delta/2.
\end{cases}
\end{align}

We now address the general case when $S \subseteq B_o(n)$. By partitioning $S$ into spherical pieces, we can upper-bound
\begin{align}\label{eq: easy}
\bigg\|\sum_{g\in S} e^{p\e \beta(g)}\bigg\|_\infty\leq \sum_{k=0}^n \bigg\|\sum_{g\in S\cap S_o(k)} e^{p\e \beta(g)}\bigg\|_\infty.
\end{align}
Using \eqref{eq: sph-cases}, and then \eqref{eq: elem} once again, we see that the right-hand side of \eqref{eq: easy} can be asymptotically bounded from above by
\begin{align*}
 \begin{cases}
 \sum_{k=0}^n\: (k+1)^{d}\: e^{p\e k} \asymp (n+1)^{d}\: e^{p\e n} & \textrm{ if } p \e >\delta/2,\\[3pt]
 \sum_{k=0}^n\: (k+1)^{d+1}\: e^{p\e k}\asymp (n+1)^{d+1}\: e^{p\e n}& \textrm{ if } p \e =\delta/2,\\[3pt]
\sum_{k=0}^n\: e^{(\delta-p\e)k} \asymp e^{(\delta-p\e)n}& \textrm{ if } p \e <\delta/2.
\end{cases}
\end{align*}
All in all, we reach \eqref{eq: cases}.
\end{proof}

By combining Lemma~\ref{lem: work} with Lemma~\ref{lem: g-cocycle}, we derive the following asymptotic upper bounds. Note that the Busemann cocycle $\beta$ no longer appears in the statement!

\begin{thm}\label{lem: nearly}
Assume that the isometric action of $\G$ on $X$ satisfies the condition
\begin{itemize}[leftmargin=45pt]
\item[\textsc{exp($\delta$):}] there exist $\delta>0$ and $C>0$ such that, for each $x,y\in X$ and $n\in \N$, we have
$\big| \{g\in \G: d(x,gy)\leq n\}\big|\leq Ce^{\delta n}$.
\end{itemize}
Then, for any subset $S\subseteq B_o(n)$, we have
\begin{align}\label{eq: nobeta}
\begin{cases}
\big\|\lambda(\ct_S)\big\|_{p\to p}\cleq (n+1)^{d/p'} e^{\delta n/p} & \textrm{ if } p\in (1,2),\\[3pt]
\big\|\lambda(\ct_S)\big\|_{2\to 2}\cleq (n+1)^{d+1}\: e^{\delta n/2} & \textrm{ if } p=2.
\end{cases}
\end{align}
\end{thm}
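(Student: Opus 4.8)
The plan is to feed the estimates of Lemma~\ref{lem: work} into the cocycle bound of Lemma~\ref{lem: g-cocycle}, and then to optimize over the scaling parameter $\e$. The first thing I would record is that balls are symmetric: since $\G$ acts by isometries, $d(o,g^{-1}o)=d(go,o)=d(o,go)$, so $S\subseteq B_o(n)$ forces $S^{-1}\subseteq B_o(n)$ as well. Hence Lemma~\ref{lem: work} applies to \emph{both} of the sums appearing on the right-hand side of \eqref{eq: concrete}. It is convenient to read that lemma as follows: for any scalar $t>0$ and any $S\subseteq B_o(n)$, the norm $\big\|\sum_{g\in S} e^{t\beta(g)}\big\|_\infty$ is $\cleq (n+1)^d e^{tn}$, $(n+1)^{d+1}e^{tn}$, or $e^{(\delta-t)n}$ according as $t>\delta/2$, $t=\delta/2$, or $t<\delta/2$ (obtained from Lemma~\ref{lem: work} by rescaling $\e$ so that $p\e=t$).

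With this in hand, I would substitute into \eqref{eq: concrete}. There the first factor carries the scalar $t=p\e$ over the set $S^{-1}$ and is raised to the power $1/p$, while the second factor carries $t=p'\e$ over $S$ and is raised to $1/p'$. The two thresholds $p\e\lessgtr\delta/2$ and $p'\e\lessgtr\delta/2$ translate into the conditions $\e\lessgtr\delta/(2p)$ and $\e\lessgtr\delta/(2p')$, and the whole argument comes down to placing $\e$ relative to these two critical values.

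For $p\in(1,2)$ one has $p<p'$, hence $\delta/(2p')<\delta/(2p)$, and I would choose any $\e$ strictly between them. Then $p\e<\delta/2$ puts the first factor in its decaying regime with no polynomial factor, so it is $\cleq\big(e^{(\delta-p\e)n}\big)^{1/p}=e^{(\delta/p-\e)n}$; and $p'\e>\delta/2$ puts the second factor in its growing regime, so it is $\cleq\big((n+1)^d e^{p'\e n}\big)^{1/p'}=(n+1)^{d/p'}e^{\e n}$. The two exponential contributions $e^{-\e n}$ and $e^{\e n}$ cancel in the product, leaving $\cleq(n+1)^{d/p'}e^{\delta n/p}$, with no residual dependence on the chosen $\e$; this is the first case of \eqref{eq: nobeta}. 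For $p=2$ the two thresholds collide, $\delta/(2p')=\delta/(2p)=\delta/4$, leaving no interior to exploit, so I am forced to take $\e=\delta/4$, at which $p\e=p'\e=\delta/2$. Both factors then sit at the borderline case of Lemma~\ref{lem: work}, each contributing $\cleq\big((n+1)^{d+1}e^{(\delta/2)n}\big)^{1/2}=(n+1)^{(d+1)/2}e^{\delta n/4}$; multiplying yields $\cleq(n+1)^{d+1}e^{\delta n/2}$, the second case of \eqref{eq: nobeta}.

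The computations here are routine; the entire substance lies in the choice of $\e$, and this is also where I expect the only genuine subtlety. The point — which accounts for the discontinuity at $p=2$ flagged in the introduction — is that for $p<2$ the \emph{strict} gap $\delta/(2p')<\delta/(2p)$ allows one factor to be pushed into its decaying regime (incurring no polynomial loss at all) while the other stays in its growing regime (incurring only the mild $(n+1)^{d/p'}$ loss), whereas at $p=2$ the two thresholds coincide and both factors are forced onto the borderline, each paying the heavier polynomial factor $(n+1)^{(d+1)/2}$.
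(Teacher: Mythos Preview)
Your proof is correct and follows essentially the same approach as the paper's: feed Lemma~\ref{lem: work} into Lemma~\ref{lem: g-cocycle} and choose $\e$ so that, for $p\in(1,2)$, the two factors land on opposite sides of the threshold $\delta/2$, while for $p=2$ both sit exactly at the threshold. The only cosmetic difference is that the paper commits to the specific value $\e=\delta/(pp')$ (the midpoint, in a multiplicative sense, of your interval $(\delta/(2p'),\delta/(2p))$), whereas you observe---correctly---that any $\e$ in that open interval produces the same bound since the $\e$-dependence cancels; your explicit remark that $S^{-1}\subseteq B_o(n)$ is also a detail the paper leaves implicit.
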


\begin{proof}
For $p\in (1,2)$, we set $\e=\delta/(pp')$. Then $p \e=\delta/p'<\delta/2$ and $p' \e=\delta/p>\delta/2$. According to \eqref{eq: cases}, we have
\[\bigg\|\sum_{g\in S^{-1}} e^{p\e \beta(g)}\bigg\|_\infty \cleq e^{(\delta-p\e)n}, \qquad\bigg\|\sum_{g\in S} e^{p'\e \beta(g)}\bigg\|_\infty\cleq (n+1)^{d}\: e^{p'\e n}.\]
Hence, by \eqref{eq: concrete}
\begin{align*}
\big\|\lambda(\ct_S)\big\|_{p\to p} \cleq \big(e^{(\delta-p\e)n}\big)^{1/p}\cdot \big((n+1)^{d}\: e^{p'\e n}\big)^{1/p'}=(n+1)^{d/p'} e^{\delta n/p}.
\end{align*}

When $p=2$, we set $\e=\delta/4$. Then $p \e=\delta/2=p' \e$, and \eqref{eq: cases} yields
\[\bigg\|\sum_{g\in S^{-1}} e^{p\e \beta(g)}\bigg\|_\infty ,\; \bigg\|\sum_{g\in S} e^{p'\e \beta(g)}\bigg\|_\infty\cleq (n+1)^{d+1}\: e^{\delta n/2}.\]
Now \eqref{eq: concrete} says, quite simply, that
\begin{align*}
\big\|\lambda(\ct_S)\big\|_{2\to 2} \cleq (n+1)^{d+1}\: e^{\delta n/2}
\end{align*}
as well.
\end{proof}

The $p$-operator norm $\big\|\lambda(\ct_S)\big\|_{p\to p}$ is non-decreasing as a function of $S$. Therefore \eqref{eq: nobeta} is at its strongest when $S$ is the whole ball $B_o(n)$. But this is not the only case of interest--we will also apply Theorem~\ref{lem: nearly} to $S$ being the sphere $S_o(n)$, or even an annulus contained in $B_o(n)$.

%%%%%%%%%%%%%%%%%%%%%%%%%%%%%%%
\section{Geometric actions and pure growth}
We maintain the standing assumptions on $X$. But we now place a stronger requirement on the proper isometric action of $\G$ on $X$--namely, that the action be cocompact. A proper and cocompact isometric action is said to be a \emph{geometric action}. When the action of $\G$ on $X$ is geometric, the condition \textsc{exp($\delta$)} is equivalent to
\begin{itemize}[leftmargin=45pt]
\item[\textsc{e($\delta$):}] for some exponent $\delta>0$ we have $|B_o(n)|\cleq e^{\delta n}$.
\end{itemize}

Our main goal is that of obtaining asymptotic upper bounds for $\|\lambda_S\|_{p \to p}$, the $p$-norm of the averaging operator $\lambda_S=|S|^{-1}\cdot \lambda(\ct_S)$. We therefore recast \eqref{eq: nobeta}, which takes the form $\|\lambda(\ct_S)\|_{p \to p}\cleq f(n)e^{\delta n/p}$, into $\|\lambda_S\|_{p \to p}\cleq f(n) e^{\delta n/p}|S|^{-1}$. Now $e^{\delta n/p}|S|^{-1}\asymp |S|^{-1/p'}$ provided that $|S|\asymp e^{\delta n}$. In summary, we can deduce that $\|\lambda_S\|_{p \to p}\cleq f(n) |S|^{-1/p'}$ under the exponential growth assumption $|S|\asymp e^{\delta n}$.

This leads us to the next two results, our main applications of Theorem~\ref{lem: nearly}. 

\begin{thm}\label{thm: balls}
Assume that $\G$ acts geometrically on $X$, and that
\begin{itemize}[leftmargin=45pt]
\item[\textsc{b($\delta$):}] for some exponent $\delta>0$ we have $|B_o(n)|\asymp e^{\delta n}$.
\end{itemize}
Then
\begin{align}\label{eq: balls}
\begin{cases}
\big\|\lambda_{B_o(n)}\big\|_{p\to p}\cleq (n+1)^{d/p'}\: |B_o(n)|^{-1/p'} & \textrm{ if } p\in (1,2),\\[3pt]
\big\|\lambda_{B_o(n)}\big\|_{2\to 2}\cleq (n+1)^{d+1}\:  |B_o(n)|^{-1/2} & \textrm{ if } p=2.
\end{cases}
\end{align}
\end{thm}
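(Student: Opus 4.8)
The plan is to specialize Theorem~\ref{lem: nearly} to the subset $S=B_o(n)$, then normalize, and finally use the growth hypothesis \textsc{b($\delta$)} to trade the exponential factor $e^{\delta n/p}$ for a power of $|B_o(n)|$. The whole argument is a bookkeeping combination of results already in hand, so the work lies entirely in matching up the exponents and in being careful about which direction of each asymptotic inequality is invoked.

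First I would check that the hypotheses of Theorem~\ref{lem: nearly} are in force. The assumption \textsc{b($\delta$)} gives in particular the one-sided estimate $|B_o(n)|\cleq e^{\delta n}$, which is exactly condition \textsc{e($\delta$)}. Since the action is geometric, \textsc{e($\delta$)} is equivalent to \textsc{exp($\delta$)}, as recalled at the opening of this section; hence \textsc{exp($\delta$)} holds, and Theorem~\ref{lem: nearly} applies to $S=B_o(n)\subseteq B_o(n)$. Reading off \eqref{eq: nobeta}, this produces $\big\|\lambda(\ct_{B_o(n)})\big\|_{p\to p}\cleq (n+1)^{d/p'}e^{\delta n/p}$ for $p\in(1,2)$, and $\big\|\lambda(\ct_{B_o(n)})\big\|_{2\to 2}\cleq (n+1)^{d+1}e^{\delta n/2}$ for $p=2$.

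Next I would normalize via the identity $\lambda_{B_o(n)}=|B_o(n)|^{-1}\lambda(\ct_{B_o(n)})$, which turns the two bounds above into bounds on $\big\|\lambda_{B_o(n)}\big\|_{p\to p}$ carrying the additional factor $e^{\delta n/p}\,|B_o(n)|^{-1}$. Finally I would invoke the full two-sided estimate $|B_o(n)|\asymp e^{\delta n}$ once more: raising it to the power $1/p$ gives $e^{\delta n/p}\asymp |B_o(n)|^{1/p}$, whence $e^{\delta n/p}\,|B_o(n)|^{-1}\asymp |B_o(n)|^{1/p-1}=|B_o(n)|^{-1/p'}$, using $1/p'=1-1/p$. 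Substituting this into the two normalized bounds yields precisely \eqref{eq: balls}, with $(n+1)^{d/p'}$ in the subcritical range and $(n+1)^{d+1}$ at $p=2$.

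I do not expect a genuine obstacle here; the content of the theorem is already encoded in Theorem~\ref{lem: nearly}. The only points that require attention are twofold. First, the equivalence \textsc{e($\delta$)}$\Leftrightarrow$\textsc{exp($\delta$)} is used only in its upper-bound direction, so merely the $\cleq$ half of \textsc{b($\delta$)} is needed to feed Theorem~\ref{lem: nearly}. Second, the \emph{full} two-sided strength of \textsc{b($\delta$)} is essential in the last step, since passing between $e^{\delta n/p}$ and $|B_o(n)|^{1/p}$ requires both inequalities; without the lower bound $e^{\delta n}\cleq|B_o(n)|$ the clean power $|B_o(n)|^{-1/p'}$ would not emerge.
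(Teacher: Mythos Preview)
Your proposal is correct and follows exactly the argument the paper gives: the paragraph immediately preceding Theorem~\ref{thm: balls} sketches precisely this derivation---apply Theorem~\ref{lem: nearly} with $S=B_o(n)$, divide by $|B_o(n)|$, and use $|B_o(n)|\asymp e^{\delta n}$ to rewrite $e^{\delta n/p}|B_o(n)|^{-1}$ as $|B_o(n)|^{-1/p'}$. Your additional remarks on which half of \textsc{b($\delta$)} is needed at each stage are accurate and not made explicit in the paper.
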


\begin{thm}\label{thm: annuli}
Assume that $\G$ acts geometrically on $X$, and that
\begin{itemize}[leftmargin=45pt]
\item[\textsc{a($\delta$):}] for some exponent $\delta>0$ and thickness $\theta\geq 0$, the annuli $A_\theta(n)=\{g\in \G: n-\theta\leq d(o,go)\leq n+\theta\}$ satisfy $|A_\theta(n)|\asymp e^{\delta n}$.
\end{itemize}
Then
\begin{align}\label{eq: annuli}
\begin{cases}
\big\|\lambda_{A_\theta(n)}\big\|_{p\to p}\cleq (n+1)^{d/p'}\:|A_\theta(n)|^{-1/p'} & \textrm{ if } p\in (1,2),\\[3pt]
\big\|\lambda_{A_\theta(n)}\big\|_{2\to 2}\cleq (n+1)^{d+1}\: |A_\theta(n)|^{-1/2} & \textrm{ if } p=2.
\end{cases}
\end{align}
\end{thm}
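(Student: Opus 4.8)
The plan is to deduce Theorem~\ref{thm: annuli} from Theorem~\ref{lem: nearly}, exploiting that for a fixed thickness $\theta$ the annulus $A_\theta(n)$ is contained in the ball $B_o(n+\theta)$, whose radius exceeds $n$ only by the constant $\theta$. Thus the bounds \eqref{eq: nobeta} apply essentially verbatim, and the only work is bookkeeping of constants together with the growth-to-norm conversion already recorded before Theorem~\ref{thm: balls}.

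First I would confirm that Theorem~\ref{lem: nearly} applies, i.e.\ that \textsc{a($\delta$)} yields the condition \textsc{exp($\delta$)}. Since the action is geometric, it is enough to check \textsc{e($\delta$)}, namely $|B_o(n)|\cleq e^{\delta n}$. Each $g\in B_o(n)$ with $d(o,go)=m$ lies in $A_\theta(m)$, so $B_o(n)\subseteq \bigcup_{m=0}^n A_\theta(m)$ and hence $|B_o(n)|\leq \sum_{m=0}^n |A_\theta(m)|\cleq \sum_{m=0}^n e^{\delta m}\asymp e^{\delta n}$, using the upper-bound half of \textsc{a($\delta$)} together with the elementary asymptotics \eqref{eq: elem}.

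Next I would apply Theorem~\ref{lem: nearly} to the subset $S=A_\theta(n)\subseteq B_o(n+\theta)$, so that \eqref{eq: nobeta} holds with $n$ replaced by $n+\theta$. Since $\theta$ is constant, the resulting factors absorb it: $(n+\theta+1)^{d/p'}\asymp (n+1)^{d/p'}$ and $e^{\delta(n+\theta)/p}=e^{\delta\theta/p}\,e^{\delta n/p}\asymp e^{\delta n/p}$ (and similarly at $p=2$). This gives $\|\lambda(\ct_{A_\theta(n)})\|_{p\to p}\cleq (n+1)^{d/p'}e^{\delta n/p}$ for $p\in(1,2)$, and $\|\lambda(\ct_{A_\theta(n)})\|_{2\to 2}\cleq (n+1)^{d+1}e^{\delta n/2}$ for $p=2$. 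Passing to $\lambda_{A_\theta(n)}=|A_\theta(n)|^{-1}\lambda(\ct_{A_\theta(n)})$ and invoking the equivalence $|A_\theta(n)|\asymp e^{\delta n}$, the key cancellation $e^{\delta n/p}|A_\theta(n)|^{-1}\asymp |A_\theta(n)|^{1/p}|A_\theta(n)|^{-1}=|A_\theta(n)|^{-1/p'}$ — precisely the computation sketched before Theorem~\ref{thm: balls}, with the parallel identity $e^{\delta n/2}|A_\theta(n)|^{-1}\asymp |A_\theta(n)|^{-1/2}$ at $p=2$ — yields \eqref{eq: annuli}.

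I expect no genuine obstacle: this is the annular analogue of Theorem~\ref{thm: balls}, and its entire content is the pair of observations that $A_\theta(n)$ sits inside a ball of radius $n+O(1)$ and that the exponential growth of the annuli turns the factor $e^{\delta n/p}$ into $|A_\theta(n)|^{1/p}$. The only step deserving a line of care is the verification in the second paragraph that the annulus hypothesis supplies the ball-growth bound needed to trigger Theorem~\ref{lem: nearly}; here one uses the upper estimate in \textsc{a($\delta$)}, while the lower estimate is what drives the final cancellation.
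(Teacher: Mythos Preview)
Your proposal is correct and matches the paper's approach: the paper does not spell out a separate proof of Theorem~\ref{thm: annuli}, treating it---together with Theorem~\ref{thm: balls}---as an immediate application of Theorem~\ref{lem: nearly} via the growth-to-norm conversion recorded in the paragraph preceding Theorem~\ref{thm: balls}, together with the remark that \textsc{a($\delta$)} implies \textsc{b($\delta$)} and hence \textsc{e($\delta$)}. Your write-up simply fills in these details, including the harmless shift from radius $n$ to $n+\theta$.
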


We note that the \emph{annular pure growth} condition \textsc{a($\delta$)} implies the \emph{ball pure growth} condition \textsc{b($\delta$)} which, in turn, obviously implies the exponential growth condition \textsc{e($\delta$)}. Let us also note that, owing to the cocompactness of the action, each one of the conditions \textsc{a($\delta$)}, \textsc{b($\delta$)}, and \textsc{e($\delta$)}, is actually independent of the basepoint $o\in X$: if it holds for some basepoint, then it holds for any other basepoint.

In the next theorem, we highlight a significant instance of annular pure growth, namely the case of \emph{spherical pure growth}. This special case has an additional feature: it yields asymptotic upper bounds for $p$-operator norms of \emph{radial functions}, that is to say finitely supported functions $a: \G\to \C$ with the property that the value $a(g)$ only depends on $d(o,go)$.

\begin{thm}\label{thm: spheres}
Assume that $\G$ acts geometrically on $X$, and that
\begin{itemize}[leftmargin=45pt]
\item[\textsc{s($\delta$):}] for some exponent $\delta>0$ we have $|S_o(n)|\asymp e^{\delta n}$.
\end{itemize}
Then
\begin{align}\label{eq: spheres}
\begin{cases}
\big\|\lambda_{S_o(n)}\big\|_{p\to p}\cleq (n+1)^{d/p'}\: |S_o(n)|^{-1/p'} & \textrm{ if } p\in (1,2),\\[3pt]
\big\|\lambda_{S_o(n)}\big\|_{2\to 2}\cleq (n+1)^{d+1}\: |S_o(n)|^{-1/2} & \textrm{ if } p=2.
\end{cases}
\end{align}
Furthermore, if $a:\G\to \C$ is a radial function supported in the ball $B_o(n)$, then
 \begin{align}\label{eq: radial}
\begin{cases}
\|\lambda(a)\|_{p\to p}\cleq (n+1)^{(d+1)/p'} \|a\|_p & \textrm{ if } p\in (1,2),\\[3pt]
\|\lambda(a)\|_{2\to 2}\cleq (n+1)^{d+3/2}\: \|a\|_2 & \textrm{ if } p=2.
\end{cases}
\end{align}
\end{thm}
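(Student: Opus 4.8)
The plan is to derive both parts of the theorem directly from Theorem~\ref{lem: nearly}, using the spherical pure growth hypothesis to convert the exponential factors $e^{\delta n/p}$ into powers of the cardinality $|S_o(n)|$. The first thing to record is that \textsc{s($\delta$)} implies \textsc{e($\delta$)}, and hence \textsc{exp($\delta$)} under the geometric action, since $|B_o(n)|=\sum_{k=0}^n |S_o(k)|\asymp \sum_{k=0}^n e^{\delta k}\asymp e^{\delta n}$. Thus Theorem~\ref{lem: nearly} is available throughout.

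For the sphere bounds \eqref{eq: spheres}, I would simply apply Theorem~\ref{lem: nearly} to $S=S_o(n)\subseteq B_o(n)$, obtaining $\|\lambda(\ct_{S_o(n)})\|_{p\to p}\cleq (n+1)^{d/p'}e^{\delta n/p}$ for $p\in(1,2)$, together with the corresponding $p=2$ estimate. Passing to the averaging operator through $\lambda_{S_o(n)}=|S_o(n)|^{-1}\lambda(\ct_{S_o(n)})$ and invoking $e^{\delta n/p}\asymp |S_o(n)|^{1/p}$ from \textsc{s($\delta$)}, the factor $e^{\delta n/p}|S_o(n)|^{-1}$ collapses to $|S_o(n)|^{-1/p'}$, which is exactly \eqref{eq: spheres}. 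This is nothing but the thickness-zero instance of Theorem~\ref{thm: annuli}.

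For the radial bounds \eqref{eq: radial}, the key step is a decomposition into spherical shells followed by H\"older's inequality. Writing a radial function supported in $B_o(n)$ as $a=\sum_{k=0}^n c_k\,\ct_{S_o(k)}$, the triangle inequality and Theorem~\ref{lem: nearly} give $\|\lambda(a)\|_{p\to p}\leq \sum_{k=0}^n |c_k|\,\|\lambda(\ct_{S_o(k)})\|_{p\to p}\cleq \sum_{k=0}^n |c_k|\,(k+1)^{d/p'}e^{\delta k/p}$. I would then split each summand as $\bigl(|c_k|\,|S_o(k)|^{1/p}\bigr)\cdot\bigl((k+1)^{d/p'}e^{\delta k/p}|S_o(k)|^{-1/p}\bigr)$ and apply H\"older with exponents $p,p'$. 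The first factors assemble into $\bigl(\sum_{k=0}^n |c_k|^p|S_o(k)|\bigr)^{1/p}=\|a\|_p$. In the second factor, the crucial cancellation is $e^{\delta k/p}|S_o(k)|^{-1/p}\asymp 1$ by \textsc{s($\delta$)}, so its $p'$-th power is $\asymp (k+1)^d$, whence $\bigl(\sum_{k=0}^n (k+1)^d\bigr)^{1/p'}\asymp (n+1)^{(d+1)/p'}$ by the elementary asymptotics \eqref{eq: elem}. This yields \eqref{eq: radial} for $p\in(1,2)$; the $p=2$ case is identical with Cauchy--Schwarz, the exponent $d$ now replaced by $d+1$ in $\|\lambda(\ct_{S_o(k)})\|_{2\to 2}$, leading to $\bigl(\sum_{k=0}^n (k+1)^{2(d+1)}\bigr)^{1/2}\asymp (n+1)^{d+3/2}$.

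The main obstacle is engineering the H\"older split in the radial estimate so that the dual sum reduces to a clean power of $(n+1)$: everything hinges on the exact matching $e^{\delta k}\asymp |S_o(k)|$ supplied by spherical pure growth, which neutralizes the exponential weights and leaves only the polynomial factors $(k+1)^d$ (respectively $(k+1)^{2(d+1)}$) to be summed via \eqref{eq: elem}. The sphere bounds \eqref{eq: spheres}, by contrast, are immediate once this exponential-to-cardinality conversion is in place.
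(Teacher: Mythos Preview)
Your proof is correct and follows essentially the same route as the paper's: deduce \eqref{eq: spheres} from Theorem~\ref{lem: nearly} (equivalently, as the $\theta=0$ case of Theorem~\ref{thm: annuli}) via the conversion $e^{\delta n}\asymp |S_o(n)|$, then decompose a radial function into spherical layers and apply H\"older with exponents $p,p'$ so that one factor reassembles into $\|a\|_p$ and the other reduces to $\bigl(\sum_{k=0}^n (k+1)^d\bigr)^{1/p'}$. The only cosmetic difference is that the paper first packages the spherical bound as $\|\lambda(\ct_{S_o(k)})\|_{p\to p}\cleq (k+1)^{d/p'}\|\ct_{S_o(k)}\|_p$ and splits the H\"older product accordingly, whereas you carry the factor $e^{\delta k/p}$ explicitly and cancel it against $|S_o(k)|^{-1/p}$ inside the dual sum; these are the same computation up to the identity $\|\ct_{S_o(k)}\|_p=|S_o(k)|^{1/p}$ and \textsc{s($\delta$)}.
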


There is a small price to pay for the generality of the radial bounds \eqref{eq: radial}: compared to the spherical bounds \eqref{eq: spheres} and the ball bounds \eqref{eq: balls}, there is a slight degree loss in \eqref{eq: radial}.

\begin{proof}
The bounds \eqref{eq: spheres} are an instance of the bounds \eqref{eq: annuli}, in the case when $\theta=0$. We only need to argue the radial bounds \eqref{eq: radial}. We detail the case $p\in (1,2)$; the case $p=2$ is very similar, and we leave it to the reader.

Let $a:\G\to \C$ be a radial function supported in the ball $B_o(n)$. We write
\[a=\sum_{k=0}^n a_k \: \ct_{S_o(k)}.\]
By \eqref{eq: spheres}, we have $\big\|\lambda(\ct_{S_o(k)})\big\|_{p\to p}\cleq (k+1)^{d/p'} \big\|\ct_{S_o(k)}\big\|_p$ for each $k$. Therefore
\begin{align*}
\|\lambda(a)\|_{p\to p}\leq \sum_{k=0}^n |a_k| \: \big\|\lambda(\ct_{S_o(k)})\big\|_{p\to p}\cleq  \sum_{k=0}^n |a_k| \: (k+1)^{d/p'} \big\|\ct_{S_o(k)}\big\|_p. 
\end{align*}
Next, by means of H\"older's inequality, we can estimate
\begin{align*}
\sum_{k=0}^n |a_k| \: (k+1)^{d/p'} \big\|\ct_{S_o(k)}\big\|_p &\leq \Bigg(\sum_{k=0}^n (k+1)^{d}\Bigg)^{1/p'}  \Bigg(\sum_{k=0}^n |a_k|^p  \big\|\ct_{S_o(k)}\big\|^p_p\Bigg)^{1/p}.
\end{align*}
Since
\[\sum_{k=0}^n (k+1)^{d}\asymp (n+1)^{d+1},\] and
\[\sum_{k=0}^n |a_k|^p  \big\|\ct_{S_o(k)}\big\|^p_p=\sum_{k=0}^n |a_k|^p |S_o(k)|=\|a\|^p_p,\]
we obtain $\|\lambda(a)\|_{p\to p}\cleq (n+1)^{(d+1)/p'} \|a\|_p$, as claimed.
\end{proof}

In comparing the two upper bounds in \eqref{eq: spheres}, we might reiterate a point we made while discussing the Cohen--Pytlik asymptotic formulas \eqref{eq: cohen} and \eqref{eq: pytlik}: somewhat strikingly, the upper bounds in the range $p\in (1,2)$ are significantly better than those obtained by interpolation from the endpoint $p=2$. Indeed, using \eqref{eq: spheres} at $p=2$, and \eqref{eq: interpol}, we would get
\[\big\|\lambda_{S_o(n)}\big\|_{p\to p}\leq \big\|\lambda_{S_o(n)}\big\|_{2\to 2}^{2/p'}\cleq  (n+1)^{2(d+1)/p'}\: |S_o(n)|^{-1/p'}.\]
By comparison, \eqref{eq: spheres} at $p\in (1,2)$ has a polynomial factor of $(n+1)^{d/p'}$.

The main novelty in Theorems~\ref{thm: balls}, \ref{thm: annuli}, and \ref{thm: spheres} is the asymptotic upper bounds in the range $p\in (1,2)$. The bounds at $p=2$ are not exactly novel; they fall under the bigger scope of property RD (for Rapid Decay), which is known in the geometric situation exploited in this paper (cf. \cite{CR, Sap}). Our cocycle-based approach to these bounds is, however, novel. The other distinguishing feature has to do with the `degree of decay'. Most works dealing with property RD are content with \emph{some} polynomial factor $(n+1)^D$, whose degree $D$ often remains unspecified. Here we are very much interested in optimal degrees--an idea first put forth in \cite{Nic10}, and pursued further in \cite{Nic17}.

%%%%%%%%%%%%%%%%%%%%%%%%%%%
\section{Expansion}\label{sec: exp}
We begin with a general setup: $\G$ is a countable discrete group, and $S\subseteq \G$ is a fixed, non-empty, finite subset. For any non-empty finite subset $X\subseteq \G$, we consider the product set $SX=\{sx: s\in S,x\in X\}\subseteq \G$. We are interested in measuring the size of $SX$, relative to the size of $X$. Clearly, $|SX|\leq |S||X|$ and $|SX|\geq |X|$. 

\begin{defn} 
The \emph{expansion} of a finite subset $S\subseteq \G$ is defined as
\[e(S)=\inf \left\{\frac{|SX|}{|X|}: X \textrm{ non-empty finite subset of }\G \right\}.\]
\end{defn}

To put it differently, $e(S)$ is the largest constant with the property that $|SX|\geq e(S)|X|$ for each finite subset $X\subseteq \G$. The generic estimate for the expansion of $S$ is that
\[1\leq e(S)\leq |S|.\]

The notion of subset expansion, defined above, is a relative of well-known graph theoretical notions such as vertex expansion, and Cheeger constant. Despite its simple-minded formulation, we have not been able to locate it in the literature. Subset expansion, as defined above, is implicit in \cite[Sec.4]{Sap}; Sapir's considerations therein are the starting point for this section.  

Expansion is of topical interest in light of the following fact: $p$-operator norms for the averaging operator $\lambda_S$ serve as lower bounds for the expansion of $S$.

\begin{lem}
Let $p\in (1,\infty)$. Then
\begin{align}\label{eq: exp}
e(S)\geq \big\|\lambda_{S}\big\|_{p\to p}^{-p'}.
\end{align}
\end{lem}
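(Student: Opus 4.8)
Since $e(S)=\inf_X |SX|/|X|$, it suffices to show that every non-empty finite subset $X\subseteq\G$ satisfies $|SX|\geq \|\lambda_S\|_{p\to p}^{-p'}\,|X|$; taking the infimum over $X$ then yields \eqref{eq: exp}. The natural test function is $\ct_X$, the characteristic function of $X$, and the idea is to extract the desired inequality by playing the operator-norm upper bound on $\lambda_S(\ct_X)$ against a lower bound coming from the total mass of $\lambda_S(\ct_X)$.

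\textbf{Two observations about $\lambda_S(\ct_X)$.} First, this function is supported in $SX$: indeed $\lambda_S(\ct_X)(h)=|S|^{-1}\,|\{g\in S: g^{-1}h\in X\}|$ is non-zero only when $h\in gX$ for some $g\in S$, i.e.\ only when $h\in SX$. Second, its total mass is exactly $|X|$, since
\[\sum_{h\in\G}\lambda_S(\ct_X)(h)=\frac{1}{|S|}\sum_{g\in S}\sum_{h\in\G}\ct_X(g^{-1}h)=\frac{1}{|S|}\sum_{g\in S}|X|=|X|.\]
Crucially, $\lambda_S(\ct_X)$ is a non-negative function, so this total mass is genuine $\ell^1$-mass.

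\textbf{Combining the bounds.} Because $\lambda_S(\ct_X)\geq 0$ is supported in $SX$, H\"older's inequality (pairing against $\ct_{SX}$) gives
\[|X|=\sum_{h\in\G}\lambda_S(\ct_X)(h)=\big\la \lambda_S(\ct_X),\ct_{SX}\big\ra\leq \big\|\lambda_S(\ct_X)\big\|_p\;\big\|\ct_{SX}\big\|_{p'}=\big\|\lambda_S(\ct_X)\big\|_p\,|SX|^{1/p'}.\]
On the other hand, the definition of the operator norm and $\|\ct_X\|_p=|X|^{1/p}$ yield $\|\lambda_S(\ct_X)\|_p\leq \|\lambda_S\|_{p\to p}\,|X|^{1/p}$. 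Substituting and using $1-1/p=1/p'$, the factor $|X|^{1/p}$ combines with the left-hand side to give $|X|^{1/p'}\leq \|\lambda_S\|_{p\to p}\,|SX|^{1/p'}$. Raising to the power $p'$ produces $|X|\leq \|\lambda_S\|_{p\to p}^{p'}\,|SX|$, which is the required estimate.

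\textbf{On the difficulty.} There is no serious obstacle here; the argument is short once one identifies the right pairing. The only point requiring care is the bookkeeping of conjugate exponents and the recognition that $\lambda_S(\ct_X)$ is non-negative, has total mass $|X|$, and lives on the set $SX$ whose size we wish to bound below. The support containment and the mass computation are exactly the structural facts that convert an operator-norm estimate into a combinatorial expansion estimate.
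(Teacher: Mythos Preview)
Your proof is correct and follows essentially the same route as the paper: both arguments pair $\lambda_S(\ct_X)$ with $\ct_{SX}$, identify this pairing as $|X|$, and bound it by $\|\lambda_S\|_{p\to p}\,|X|^{1/p}\,|SX|^{1/p'}$ via H\"older and the operator norm. The only cosmetic difference is that the paper computes $\la\lambda_S(\ct_X),\ct_{SX}\ra$ by expanding $\lambda_S(\ct_X)=|S|^{-1}\sum_{g\in S}\ct_{gX}$ and using $gX\subseteq SX$, whereas you phrase the same fact as ``support in $SX$'' plus ``total mass $|X|$''.
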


\begin{proof}
Recall the duality pairing $\la \cdot, \cdot \ra: \ell^p\G\times \ell^{p'}\G\to \C$, defined by $\la \phi, \psi\ra=\sum_{h\in \G} \phi(h)\:\overline{\psi(h)}$. By H\"older's inequality, we have
\[\big|\la \lambda_{S}(\phi), \psi \ra\big|\leq \|\lambda_S(\phi)\|_p\cdot \|\psi\|_{p'}\leq \big\|\lambda_{S}\big\|_{p\to p}\: \|\phi\|_p \|\psi\|_{p'}. \]

Let $X\subseteq \G$ be a non-empty finite subset. By applying the above bound to $\phi=\ct_X$ and $\psi=\ct_{SX}$, we find that
\[\big|\la \lambda_S(\ct_X), \ct_{SX}\ra\big|\leq \big\|\lambda_{S}\big\|_{p\to p}\: |X|^{1/p}|SX|^{1/p'}.\]
Now 
\[ \lambda_S(\ct_X)=\frac{1}{|S|} \sum_{g\in S} \ct_{gX}\]
and so
\[ \la\lambda_S(\ct_X), \ct_{SX}\ra=\frac{1}{|S|} \sum_{g\in S}\la \ct_{gX},\ct_{SX}\ra=|X|\]
since, for each $g\in S$, we have $\la \ct_{gX},\ct_{SX}\ra=|gX\cap SX|=|gX|=|X|$. Thus
\[|X|\leq \big\|\lambda_{S}\big\|_{p\to p}\: |X|^{1/p}|SX|^{1/p'},\]
which can be rearranged as 
\[\big\|\lambda_{S}\big\|^{-p'}_{p\to p}\leq\frac{|SX|}{|X|}.\]
By taking the infimum over $X$, we obtain \eqref{eq: exp}.
\end{proof}

The interpolation bound \eqref{eq: interpol} says that
\[\big\|\lambda_{S}\big\|_{p\to p}^{-p'}\geq \big\|\lambda_{S}\big\|_{2\to 2}^{-2}\]
whenever $p\in (1,2)$. We therefore expect better lower bounds for $e(S)$ by using \eqref{eq: exp} in the range $p\in (1,2)$ rather than at the endpoint $p=2$.

By combining the above lemma with the results obtained in the previous section, we can deduce asymptotic lower bounds for the expansion of spheres, balls, or annuli. For the sake of simplicity, we only state the outcome in the case of spheres.

\begin{thm}\label{thm: expbelow}
Keep the notations and the assumptions of Theorem~\ref{thm: spheres}. Then
\[e(S_o(n))\succcurlyeq \frac{|S_o(n)|}{(n+1)^d}.\]
\end{thm}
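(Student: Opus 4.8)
The plan is to combine the expansion lower bound from the previous lemma, namely inequality \eqref{eq: exp}, with the spherical upper bound \eqref{eq: spheres} from Theorem~\ref{thm: spheres}, choosing the exponent $p$ freely in the range $(1,2)$ and then optimizing. The key observation is that \eqref{eq: exp} gives $e(S_o(n))\geq \|\lambda_{S_o(n)}\|_{p\to p}^{-p'}$, so any upper bound on the $p$-operator norm converts directly into a lower bound on expansion, and the exponent $-p'$ means we want $p$ close to $1$ (equivalently $p'$ large) to extract the most from the norm estimate.

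First I would substitute the $p\in(1,2)$ bound from \eqref{eq: spheres}, which reads $\|\lambda_{S_o(n)}\|_{p\to p}\cleq (n+1)^{d/p'}|S_o(n)|^{-1/p'}$. Raising this to the power $-p'$ and applying \eqref{eq: exp} yields
\begin{align*}
e(S_o(n))\geq \|\lambda_{S_o(n)}\|_{p\to p}^{-p'}\succcurlyeq \big((n+1)^{d/p'}|S_o(n)|^{-1/p'}\big)^{-p'}=(n+1)^{-d}\,|S_o(n)|=\frac{|S_o(n)|}{(n+1)^d},
\end{align*}
where the implied constant depends on $p$ and $d$ but not on $n$. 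Here the $-1/p'$ in the exponent of $|S_o(n)|$ cancels cleanly against the outer $-p'$ to produce the first power of $|S_o(n)|$, and similarly the polynomial factor contributes exactly $(n+1)^{-d}$. This is precisely the claimed bound.

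The pleasant feature of this argument is that the exponent $p$ drops out entirely from the final estimate, so no genuine optimization over $p$ is even required—any fixed $p\in(1,2)$ produces the same asymptotic conclusion, and we may simply fix one such $p$ at the outset. I do not anticipate a serious obstacle, since both ingredients are already established: the only care needed is to track that the $\asymp$ hypothesis \textsc{s($\delta$)} underlying Theorem~\ref{thm: spheres} is in force (it is, by hypothesis), and to note that the multiplicative constants in $\cleq$ are independent of $n$, so that raising to the fixed power $-p'$ preserves the asymptotic relation $\succcurlyeq$. The whole proof is therefore a short substitution, and the substantive content lives entirely in the earlier results \eqref{eq: exp} and Theorem~\ref{thm: spheres}.
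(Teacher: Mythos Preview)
Your proposal is correct and follows essentially the same approach as the paper: combine the expansion bound \eqref{eq: exp} with the spherical $p\in(1,2)$ estimate \eqref{eq: spheres}, then raise to the power $-p'$ and observe that the exponent $p$ cancels out. The paper likewise notes that any fixed $p\in(1,2)$ suffices and emphasizes that the endpoint $p=2$ would yield only the weaker bound $|S_o(n)|/(n+1)^{2(d+1)}$.
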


In agreement with a point made above, it is critical to work with some $p$ in the range $(1,2)$ in order to deduce Theorem~\ref{thm: expbelow}. The bounds \eqref{eq: spheres} give
\[\big\|\lambda_{S_o(n)}\big\|_{p\to p}^{-p'}\succcurlyeq \frac{|S_o(n)|}{(n+1)^d}\]
for any $p\in (1,2)$, yet at $p=2$ we only have the much weaker bound
\[\big\|\lambda_{S_o(n)}\big\|_{2\to 2}^{-2}\succcurlyeq \frac{|S_o(n)|}{(n+1)^{2(d+1)}}.\]

%%%%%%%%%%%%%%%%%%%%%%%%

\section{Hyperbolic groups}\label{sec: hyp}
The foremost example for us is the case when $\G$ is a non-elementary hyperbolic group. A choice of a finite, symmetric generating set for $\G$ defines a word length on $\G$, as well as a Cayley graph $X$ of $\G$, on which $\G$ acts geometrically. 

The first favorable fact is that the graph $X$ is roughly modular, and satisfies \textsc{pol($d$)} for $d=0$. We detail this fact in the following simple lemma.

\begin{lem}
Let $X$ be a connected, uniformly finite, hyperbolic graph. Then
\begin{itemize}
\item[(i)] for $\rho=2\delta+2$, where $\delta\geq 0$ is a hyperbolicity constant in the sense of Rips's thin triangle condition, we have $[x,y]_\rho\cap [y,z]_\rho \cap [z,x]_\rho\neq \emptyset$ for all $x,y,z\in X$;
\item[(ii)] for each $\rho\geq 0$ there exists $C'>0$ so that $| \{z\in [x,y]_\rho: d(x,z)=n\}|\leq C'$ for each $x,y\in X$ and $n\in \N$.
\end{itemize}
\end{lem}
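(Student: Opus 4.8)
The plan is to handle (i) via a connectedness (``intermediate value'') argument along a single geodesic, and (ii) by showing that a rough geodesic segment stays in a bounded neighbourhood of a genuine geodesic and then invoking the bounded degree of $X$.

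For (i), fix $x,y,z$ and a genuine geodesic $[x,y]$ in $X$, together with geodesics $[y,z]$ and $[z,x]$. Partition the vertices of $[x,y]$ according to which of the other two sides they are $\delta$-close to: set $A=\{w\in[x,y]:d(w,[z,x])\le\delta\}$ and $B=\{w\in[x,y]:d(w,[y,z])\le\delta\}$. Rips's thin-triangle condition says precisely that $A\cup B=[x,y]$, while $x\in A$ and $y\in B$. Since $[x,y]$ is an edge-path with consecutive vertices at distance $1$, walking from $x$ to $y$ I can find a vertex lying in both $A$ and $B$, or two adjacent vertices one in each; in either case there is a vertex $m\in[x,y]$ with $d(m,[z,x])\le\delta+1$ and $d(m,[y,z])\le\delta+1$. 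It remains to upgrade these two inequalities into membership in the rough segments. If $q\in[z,x]$ realises $d(m,q)\le\delta+1$, then, using $d(x,q)+d(q,z)=d(x,z)$,
\[
d(x,m)+d(m,z)\le d(x,q)+d(q,z)+2d(m,q)\le d(x,z)+2(\delta+1),
\]
so $m\in[z,x]_\rho$ with $\rho=2\delta+2$; the identical computation gives $m\in[y,z]_\rho$, and $m\in[x,y]_\rho$ is automatic since $m$ lies on the geodesic $[x,y]$. This $m$ is the desired common point.

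For (ii), the key geometric input is that $[x,y]_\rho$ lies in a bounded neighbourhood of the geodesic $[x,y]$. Indeed, any $z\in[x,y]_\rho$ satisfies $d(x,z)+d(z,y)\le d(x,y)+\rho$, i.e. the Gromov product $(x,y)_z=\tfrac12(d(z,x)+d(z,y)-d(x,y))$ obeys $(x,y)_z\le\rho/2$; the standard hyperbolic estimate $d(z,[x,y])\le (x,y)_z+4\delta$ then gives $d(z,[x,y])\le R$ with $R=\rho/2+4\delta$. Now fix $n$ and take $z\in[x,y]_\rho$ with $d(x,z)=n$. Let $\pi(z)\in[x,y]$ be a nearest point, so $d(z,\pi(z))\le R$ and hence $|d(x,\pi(z))-n|\le R$. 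If $n\le d(x,y)$, then $\pi(z)$ --- and therefore $z$ itself, up to $R$ more --- lies within distance $2R$ of the single vertex $w_n\in[x,y]$ at distance $n$ from $x$; if $n>d(x,y)$, then $z\in[x,y]_\rho$ forces $d(z,y)<\rho$, so $z$ lies in the ball of radius $\rho$ about $y$. In all cases the set $\{z\in[x,y]_\rho:d(x,z)=n\}$ is contained in a single ball of radius $\max\{2R,\rho\}$. Since $X$ is uniformly finite, say of degree at most $\Delta$, any such ball contains at most $C'=C'(\Delta,R,\rho)$ vertices, a bound independent of $x,y,n$.

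The routine bookkeeping --- the off-by-one in the discrete intermediate-value step and the precise additive constants --- is harmless. The one genuinely geometric ingredient is the fellow-travelling estimate $d(z,[x,y])\le (x,y)_z+4\delta$ used in (ii); I would either cite it from a standard reference on hyperbolic spaces or derive it quickly from the thin-triangle condition, and this is the only place where hyperbolicity (beyond what part (i) already extracts) is really needed.
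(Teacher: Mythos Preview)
Your proof is correct and, for part~(i), essentially identical to the paper's: the same intermediate-value walk along $\gamma_{xy}$, the same $\delta+1$ bound, and the same upgrade via $d(x,m)+d(m,y)\le d(x,y)+2c$ when $\mathrm{dist}(m,\gamma_{xy})\le c$.

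For part~(ii) the overall strategy is again the same---show that any $z\in[x,y]_\rho$ lies within a bounded distance of a geodesic $\gamma_{xy}$, then use uniform finiteness---but the paper obtains the fellow-travelling bound in a self-contained way rather than citing the Gromov-product estimate $d(z,[x,y])\le (x,y)_z+4\delta$. Concretely, the paper reapplies the median construction from~(i) to the triangle $x,y,z$: the resulting vertex $v\in\gamma_{xy}$ satisfies $d(y,v)+d(v,z)\le d(y,z)+2(\delta+1)$ and $d(z,v)+d(v,x)\le d(z,x)+2(\delta+1)$, and adding these together with $d(x,v)+d(v,y)=d(x,y)$ yields $d(z,v)\le \rho/2+2(\delta+1)$. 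This gives the same conclusion with an explicit constant and avoids the external reference; your approach is equally valid but outsources that one step.
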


\begin{proof}
We will use several times the following observation: if $\gamma_{xy}$ is a (discrete) geodesic joining $x$ to $y$, and $ \mathrm{dist}(m,\gamma_{xy})\leq c$, then $d(x,m)+d(m,y)\leq d(x,y)+2c$. 

(i) Let $x,y,z\in X$, and consider (discrete) geodesics $\gamma_{xy}$, $\gamma_{yz}$, $\gamma_{zx}$ joining $x$ to $y$, $y$ to $z$, respectively $z$ to $x$. Rips's thin triangle condition says that $\mathrm{dist}(v,\gamma_{yz})\leq \delta$ or $\mathrm{dist}(v,\gamma_{zx})\leq \delta$ for every vertex $v$ on $\gamma_{xy}$. It follows that there exists $v$ on $\gamma_{xy}$ such that $\mathrm{dist}(v,\gamma_{yz})\leq \delta+1$ and $\mathrm{dist}(v,\gamma_{zx})\leq \delta+1$. By the above observation, we have
\begin{align}\label{eq: rips}
\begin{cases}
d(y,v)+d(v,z)\leq d(y,z)+2(\delta+1),\\
d(z,v)+d(v,x)\leq d(z,x)+2(\delta+1).
\end{cases}
\end{align} 
In other words, $v\in [y,z]_{2\delta+2}$ and $v\in [z,x]_{2\delta+2}$; obviously, $v\in [x,y]_{2\delta+2}$ as well.

(ii) Let $\rho\geq 0$, and fix $x,y\in X$ as well as a (discrete) geodesic $\gamma_{xy}$ joining $x$ to $y$. Pick $z\in [x,y]_\rho$. Consider again a point $v$ on $\gamma_{xy}$ satisfying \eqref{eq: rips}. By adding the two inequalities we deduce that 
\[d(x,y)+2d(z,v)\leq d(x,z)+d(z,y)+4(\delta+1);\]
as $d(x,z)+d(z,y)\leq d(x,y)+\rho$, it follows that $d(z,v)\leq 2(\delta+1)+\rho/2=:\rho'$. 

Now, if $z\in [x,y]_\rho$ also satisfies $d(x,z)=n$, then $n-\rho'\leq d(x,v)\leq n+\rho'$. At most $2\rho'+1=\rho+4\delta+5$ vertices $v$ satisfy this bound since, we recall, $v$ lies on a geodesic $\gamma_{xy}$. For each such $v$ there are at most $N_\rho$ possibilities for $z$, where $N_\rho=\sup_{x\in X} |B_{\rho'}(x)|$; the finiteness of $N_\rho$ reflects the hypothesis that $X$ is uniformly finite. All in all, $| \{z\in [x,y]_\rho: d(x,z)=n\}|\leq (\rho+4\delta+5)N_\rho$, uniformly in $x,y\in X$ and $n\in \N$.
\end{proof}

The second favorable fact is that the Cayley graph $X$ enjoys pure spherical growth: $|S(n)|\asymp e^{\delta n}$ for some $\delta>0$. This is a result due to Coornaert \cite{Coo}. We are therefore in position to apply Theorem~\ref{thm: spheres}.

\begin{thm}\label{thm: hypspheres}
Let $\G$ be a non-elementary hyperbolic group, endowed with a word-length, and let $p\in (1,2)$. Then
\begin{align}\label{eq: hypspheres}
\big\|\lambda_{S(n)}\big\|_{p\to p}\cleq |S(n)|^{-1/p'}.
\end{align}
Furthermore, if $a:\G\to \C$ is a radial function supported in the ball $B(n)$, then
 \begin{align}\label{eq: hypradial}
\|\lambda(a)\|_{p\to p}\cleq (n+1)^{1/p'} \|a\|_p.
\end{align}
\end{thm}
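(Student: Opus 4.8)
The plan is to recognize \textsc{Theorem}~\ref{thm: hypspheres} as an immediate corollary of \textsc{Theorem}~\ref{thm: spheres}, which has already been established for any geometric action satisfying the standing assumptions \textsc{mod} and \textsc{pol($d$)} together with spherical pure growth \textsc{s($\delta$)}. The entire content of the proof is therefore verifying that the three hypotheses of \textsc{Theorem}~\ref{thm: spheres} hold in the concrete setting of a non-elementary hyperbolic group $\G$ with word-length, acting on its Cayley graph $X$, and then reading off the conclusion with the sharpened value $d=0$.

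First I would fix a finite symmetric generating set for $\G$, with associated word-length and Cayley graph $X$, and record that $\G$ acts on $X$ geometrically (properly and cocompactly) by left translations. Next I would invoke the preceding lemma of this section to supply the two geometric standing assumptions: part (i) gives rough modularity \textsc{mod} with constant $\rho=2\delta+2$, and part (ii) gives \textsc{pol($d$)} with $d=0$, since the number of points on a rough geodesic segment at a fixed distance from an endpoint is bounded by a constant independent of $n$. Then I would cite Coornaert's theorem \cite{Coo} to obtain spherical pure growth $|S(n)|\asymp e^{\delta n}$, which is exactly the hypothesis \textsc{s($\delta$)}; here $|S(n)|=|S_o(n)|$ under the identification $l(g)=d(o,go)$ with $o$ the identity vertex. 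With all hypotheses of \textsc{Theorem}~\ref{thm: spheres} verified, the bounds \eqref{eq: spheres} and \eqref{eq: radial} apply.

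Finally I would specialize those two bounds to $d=0$ and to the range $p\in(1,2)$. The spherical bound \eqref{eq: spheres} becomes $\|\lambda_{S_o(n)}\|_{p\to p}\cleq (n+1)^{0}|S_o(n)|^{-1/p'}=|S(n)|^{-1/p'}$, which is precisely \eqref{eq: hypspheres}. The radial bound \eqref{eq: radial} becomes $\|\lambda(a)\|_{p\to p}\cleq (n+1)^{(0+1)/p'}\|a\|_p=(n+1)^{1/p'}\|a\|_p$, which is precisely \eqref{eq: hypradial}.

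I expect essentially no obstacle here, since the substantive analytic work has already been carried out in \textsc{Theorem}~\ref{thm: spheres} and the substantive geometric inputs in the section's lemma and in Coornaert's result. The only point demanding a word of care is the bookkeeping identification that the word-metric sphere $S(n)$ coincides with the orbit-metric sphere $S_o(n)$ when the basepoint $o$ is taken to be the identity, so that $d(o,go)=l(g)$; once this is noted, the statement follows by direct substitution of $d=0$ into the already-proven estimates, and nothing further is required.
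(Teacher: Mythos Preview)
Your proposal is correct and follows exactly the paper's approach: verify the standing assumptions \textsc{mod} and \textsc{pol($0$)} via the preceding lemma, invoke Coornaert's pure spherical growth for \textsc{s($\delta$)}, and then specialize Theorem~\ref{thm: spheres} to $d=0$. The paper presents Theorem~\ref{thm: hypspheres} in precisely this way, as an immediate application once the geometric inputs are in place.
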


The third favorable fact is that the asymptotic upper bound \eqref{eq: hypspheres} exactly complements the trivial lower bound \eqref{eq:triv-lower}, thereby implying Theorem~\ref{thm: main}.

As for expansion, Theorem~\ref{thm: expbelow} implies the following result--a restatement of which is Theorem~\ref{thm: exphyp}. 
\begin{thm}\label{cor: exphyp}
Let $\G$ be a non-elementary hyperbolic group, endowed with a word-length. Then
\[e(S(n))\succcurlyeq |S(n)|.\]
\end{thm}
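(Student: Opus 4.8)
The plan is to derive Theorem~\ref{cor: exphyp} as the direct specialization of Theorem~\ref{thm: expbelow} to the Cayley graph setting already established earlier in this section. Since $\G$ is a non-elementary hyperbolic group with a fixed word-length, the Cayley graph $X$ satisfies all the standing assumptions: by the preceding lemma, $X$ is roughly modular and satisfies \textsc{pol($d$)} with the crucial value $d=0$; by Coornaert's result the spherical pure growth condition \textsc{s($\delta$)} holds, namely $|S(n)|\asymp e^{\delta n}$; and the action is geometric. Thus Theorem~\ref{thm: spheres}, and consequently Theorem~\ref{thm: expbelow}, apply.

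First I would simply invoke Theorem~\ref{thm: expbelow}, which gives
\[e(S_o(n))\succcurlyeq \frac{|S_o(n)|}{(n+1)^d}.\]
The entire point is that in the hyperbolic setting we have $d=0$, so that the polynomial denominator $(n+1)^d$ collapses to the constant $1$. This immediately yields
\[e(S(n))\succcurlyeq |S(n)|,\]
which is exactly the claim. (Here $S_o(n)$ and $S(n)$ agree, as the length function $l(g)=d(o,go)$ induced by the basepoint is the word-length.)

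There is essentially no obstacle at this stage: all the substantive work has been front-loaded into Theorem~\ref{thm: spheres} (the cocycle-based upper bound for $p$-operator norms in the range $p\in(1,2)$), into the expansion lemma \eqref{eq: exp} relating $e(S)$ to $\|\lambda_S\|_{p\to p}^{-p'}$, and into the verification that hyperbolic Cayley graphs meet the geometric hypotheses with $d=0$. The only thing worth emphasizing for the reader is \emph{why} one must pass through some $p\in(1,2)$ rather than the endpoint $p=2$: as noted just after Theorem~\ref{thm: expbelow}, the bound \eqref{eq: spheres} at $p=2$ only yields $e(S(n))\succcurlyeq |S(n)|/(n+1)^{2(d+1)}$, which even with $d=0$ degrades to $|S(n)|/(n+1)^2$ and hence fails to give the sharp linear-in-$|S(n)|$ lower bound. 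It is precisely the superior $p\in(1,2)$ estimate, with its single polynomial factor $(n+1)^{d/p'}$, that makes the $d=0$ cancellation clean.

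Finally I would observe that since $1\leq e(S(n))\leq |S(n)|$ always holds, the lower bound $e(S(n))\succcurlyeq |S(n)|$ is asymptotically optimal, and that unpacking the definition of $e(S(n))$ recovers the statement of Theorem~\ref{thm: exphyp}: there is a constant $c\in(0,1)$ with $|S(n)X|\geq c\,|S(n)||X|$ for every finite $X\subseteq\G$, uniformly in $n$.
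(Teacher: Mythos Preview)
Your proposal is correct and follows exactly the paper's approach: the paper derives Theorem~\ref{cor: exphyp} simply by invoking Theorem~\ref{thm: expbelow} in the hyperbolic setting, where the lemma of Section~\ref{sec: hyp} gives $d=0$ and Coornaert's theorem supplies \textsc{s($\delta$)}, so the $(n+1)^d$ denominator disappears. Your additional commentary on why $p\in(1,2)$ is essential and on the optimality of the bound is accurate and mirrors remarks the paper makes immediately before and after the theorem.
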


Once again, this exactly complements the trivial upper bound $e(S(n))\leq |S(n)|$.

%%%%%%%%%%%%%%%%%%%%%%%%%%%
\section{Final remarks}

\begin{rem} 
Our motivating example in estimating $p$-operator norms for spherical averaging operators was the case of hyperbolic groups. The end result, Theorem~\ref{thm: spheres}, is quite a bit more general, and hyperbolic groups appear--in retrospect--as the simplest case. Pure exponential growth for balls, spheres, or annuli of radius $n$, of the form $e^{\delta n}$, can be generalized to mixed growth--that is, growth of the form $(n+1)^\alpha\: e^{\delta n}$. The techniques of this paper clearly go through. This should lead to estimates of $p$-operator norms for spherical averaging operators on cocompact lattices in $\mathrm{SL}_3(\R)$ and $\mathrm{SL}_3(\C)$ (cf. Lafforgue \cite{Laf}). 
\end{rem}

\begin{rem} Other contexts in which our results seem applicable include groups acting on finite dimensional CAT$(0)$ cube complexes, and groups that are hyperbolic relative to a family of subgroups of polynomial growth (cf. Chatterji--Ruane \cite{CR}). Under additional hypotheses, one also has pure annular growth for these families of groups by results of Yang \cite{Y}. 

Yet another context is that of groups which are cocompact lattices in products of hyperbolic graphs. The Cartesian product $X_1\square X_2$ of two graphs $X_1$ and $X_2$ is the graph with vertex set $X_1\times X_2$, and edges defined as follows: $(x_1,x_2)$ is adjacent to $(y_1,y_2)$ if $x_1=y_1$ and $x_2$ is adjacent to $y_2$, or if $x_1$ is adjacent to $y_1$ and $x_2=y_2$. The combinatorial distance in $X_1\square X_2$ is the sum metric $d\big((x_1,x_2), (y_1,y_2)\big)=d_{X_1}(x_1,y_1)+d_{X_2}(x_2,y_2)$. As a metric space, $X_1\square X_2$ is often referred to as the $\ell^1$-product of $X_1$ and $X_2$. The main point is that our standing assumptions are stable under taking Cartesian products. Specifically, we have the following fact: if $X_1$ and $X_2$ are roughly modular graphs, satisfying \textsc{pol($d_1$)} respectively \textsc{pol($d_2$)}, then their Cartesian product $X_1\square X_2$ is a roughly modular graph satisfying \textsc{pol($d_1+d_2+1$)}. In particular, the Cartesian product of $d-1$ uniformly finite, hyperbolic graphs is roughly modular and satisfies \textsc{pol($d$)}.
\end{rem} 

\begin{rem} Liao and Yu \cite[Defn.4.1]{LY} have introduced the following definition: a group $\G$ endowed with a length function has \emph{property RD$_p$}, where $1<p\leq 2$, if there exist constants $C>0$ and $D\geq 0$ such that
\[\|\lambda(a)\|_{p\to p}\leq C(n+1)^{D} \|a\|_p\]
for all functions $a:\G\to \C$ supported in the ball $B(n)$.

For $p=2$, this is the usual property RD; furthermore, it can be shown \cite[Thm.4.4]{LY} that property RD implies property RD$_p$ for all $p\in (1, 2]$. This has implications for the K-theory of Banach algebra completions of the group algebra $\C\G$ for varying exponents $p\in (1, 2]$.

From an analytic standpoint, it is interesting to find the optimal degree $D$ in the RD$_p$ bound, and to understand its dependence on $p$. Theorem~\ref{thm: spheres} gives RD$_p$-type bounds for constant spherical functions and, more generally, for radial functions, though the approach of this paper does not seem well-suited for the full RD$_p$ bound. Nonetheless, the radial RD$_p$ bounds suggest that the optimal degree $D$ in the range $p\in (1,2)$ is below what interpolation from the endpoint $p=2$ would predict.

Specifically, it seems likely that, for a non-elementary hyperbolic group $\G$ and $p\in (1,2)$, the optimal degree is $D=1/p'$; namely, the following should hold: there exists a constant $C>0$ such that
\[\|\lambda(a)\|_{p\to p}\leq C(n+1)^{1/p'} \|a\|_p\]
for all functions $a:\G\to \C$ supported in the ball $B(n)$. This is what we establish in \eqref{eq: hypradial} for radial functions. For comparison, we note that at the endpoint $p=2$ the optimal degree is $D=3/2$, and it is attained by radial functions \cite{Nic17}.
\end{rem}

\begin{rem} 
Asymptotic lower bounds for expansion, as in Theorem~\ref{thm: expbelow}, appear fairly naturally in connection with asymptotic upper bounds for $p$-operator norms of averaging operators. It would be interesting, however, to derive the bound of Theorem~\ref{thm: expbelow}--as well as improvements and variations thereof--in a direct, combinatorial way, without appealing to functional-analytic detours. The case of hyperbolic groups (Theorem~\ref{cor: exphyp}) seems particularly appealing in this respect.
\end{rem}

\begin{ack}
A very preliminary version of this paper was written during a research visit at Texas A\&M University in the Spring of 2017. I am grateful to Guoliang Yu for the invitation. I also thank Benben Liao for the fruitful conversations we had at the time around the preprint \cite{LY}.

The completion of this work was supported by a Ralph E. Powe enhancement award from the Oak Ridge Associate Universities.

Finally, I thank Kevin Boucher and J\'an \v{S}pakula for feedback.
\end{ack}
%%%%%%%%%%%%%%%%%%%%%%%%%%

\end{document}